\newcommand{\reals}{{\mathbb{R}}}
\newcommand{\eps}{{\varepsilon}}
\newcommand{\Lp}[2]{\|{#1}\|_{L^{#2}}}
\newcommand{\bfj}{{\mathbf{j}}}
\newcommand{\bfI}{{\mathbf{I}}}
\newcommand{\supp}{{\rm{supp}}}
\newcommand{\diam}{{\rm{diam}}}
\newcommand{\bfJ}{{\mathbf{J}}}
\newcommand{\scriptJ}{{\mathcal{J}}}
\newtheorem{theorem}{Theorem}
\newtheorem{lemma}{Lemma}
\newtheorem{prop}{Proposition}
\newtheorem*{cor}{Corollary}
\newtheorem{defn}{Definition}
\begin{document}

\subjclass{42B15 (primary)}

\title{$L^p$ improving multilinear Radon-like transforms}
\thanks{The author was supported in part by NSF grants DMS-040126 and DMS-0901569.}

\author{Betsy Stovall}
\address{Department of Mathematics, UCLA, Los Angeles, CA 90095-1555}
\email{betsy@math.ucla.edu}

\begin{abstract}
We characterize (up to endpoints) the $k$-tuples $(p_1,\ldots,p_k)$ for which certain $k$-linear generalized Radon transforms map $L^{p_1} \times \cdots \times L^{p_k}$ boundedly into $\reals$.  This generalizes a result of Tao and Wright.
\end{abstract}

\maketitle
\section{Introduction}

Let $n \geq 2$, $k \geq 2$, and for $1 \leq j \leq k$, let
$\pi_j:\reals^n \to \reals^{n-1}$ be a smooth submersion (i.e.\ $D\pi_j$ has
maximal rank at each point).  Without loss of generality, $\pi_j(0)
= 0$, $1 \leq j \leq k$.  We define an operator $S$, acting on
$k$-tuples of functions on $\reals^{n-1}$ by
\begin{align} \label{defS}
    S(f_1,\ldots,f_k):= \int_{\reals^n} \prod_{j=1}^k f_j \circ
    \pi_j(x) a(x) \,dx.
\end{align}
Here $a \in C_0^{\infty}(\reals^n)$ is a cutoff function with $a(0)
\neq 0$ whose support will be contained in a small neighborhood $V$
of 0.  We are interested in $k$-tuples $(p_1,\ldots,p_k)$ for which
$S$ satisfies
\begin{align} \label{stineq}
    |S(f_1,\ldots,f_k)| \leq C \prod_{j=1}^k
    {\Lp{f_j}{p_j}}_{(\reals^{n-1})},
\end{align}
where $C$ is a finite constant which depends on the $\pi_j$, the
$p_j$, and $a$, but not on the $f_j$.

This issue has been resolved (or resolved up to endpoints) in some special cases.  

First, when $k=2$, Tao and Wright in \cite{TW} characterized up to endpoints the pairs $(p_1,p_2)$ such that \eqref{stineq} is satisfied.  These bounds were reproved by Christ in \cite{ChRL} using partially alternative techniques.  More recently, in \cite{GressPolyEndpt}, Gressman used the techniques in \cite{TW} and \cite{ChRL} and some new details to establish restricted weak-type bounds at the endpoint when the fibres of the $\pi_j$ are polynomial curves.  

Second, when $k=n$ and the kernels of the differentials $d\pi_1,\ldots,d\pi_n$ span the tangent space to $\reals^n$ at every point, the bound \eqref{stineq} holds with $p_j=n-1$, $1 \leq j \leq n$.  When the $\pi_j$ are linear, this is the Loomis--Whitney inequality (\cite{LW}).  When the $\pi_j$ are not linear, the bound was proved by Bennett, Carbery, and Wright in \cite{BCW} (when the $\pi_j$ are in $C^3(\reals^n)$).  An alternative proof, using induction on scales, has recently been given in \cite{BHT} by Bejenaru, Herr, and Tataru (perturbed Loomis--Whitney in dimension 3) and as a special case of the result in \cite{BB} by Bennett and Bez.  The advantage of these two more recent results is that they give more quantitative bounds and treat a lower regularity case ($\pi_j \in C^{1,\beta}(\reals^n)$) than \cite{BCW}.

There is a significant gap between the special cases treated in \cite{TW} and \cite{BCW}.  In the first case, curvature of the fibres of the $\pi_j$ plays a role in determining the $L^p$ bounds, but the theory is restricted to bilinear operators.  In the second case, multilinear operators are allowed, but curvature plays no role.  Our goal in this article is to fill in this gap by characterizing (up to endpoints) the $k$-tuples $(p_1,\ldots,p_k)$ such that the bound \eqref{stineq} holds for all values of $k$ and smooth submersions $\pi_1,\ldots,\pi_k : \reals^n \to \reals^{n-1}$.  Our techniques are an adaptation of the method of refinements developed by Christ in \cite{CCC} and later applied in \cite{TW}, \cite{ChRL}, and \cite{BCW} as well as many other articles.  In particular, many of our arguments are adapted from \cite{ChRL} and \cite{TW}, though some new details are needed for the multilinear case.  To the extent possible, we have tried to follow the outline from those two works.

{\bf Notation.}  The notation we will employ is relatively standard, and in particular largely matches that in \cite{TW} and \cite{ChRL}.  As has become common in the harmonic analysis literature, we will hide constants in two types of symbols.  If $A$ and $B$ are two non-negative real numbers, then $A \lesssim B$ if there exists a (large) constant $C$ such that $A \leq CB$, and $A \ll B$ if there exists a (small) constant $c$ such that $A \leq cB$.  The chief difference between the symbols is in their use, $\lesssim$ typically appearing in the conclusion and $\ll$ in the hypothesis of a statement.  For instance, the statement, ``If $A \ll B$, then $f(A) \lesssim f(B)$,'' may be read as, ``There exist constants $c$ and $C$ such that whenever $A \leq cB$, we have $f(A) \leq Cf(B)$.''  Finally, $A \sim B$ if $A \lesssim B$ and $B \lesssim A$.  The dependence of the implicit constants will be specified as needed.

{\bf Acknowledgements.}  This work is adapted from part of the author's Ph.D.\ thesis, and she would like to thank her advisor, Mike Christ for suggesting this problem and for his help and advice on this project.  The author would also like to thank the anonymous referee for many helpful comments and suggestions.

\section{Restricted weak type inequalities and $L^p$-improving
operators}

We start by making some preliminary reductions in the $k$-tuples
$(p_1,\ldots,p_k)$ under consideration.  For the purposes of this
section, all implicit constants depend on $a$, the $\pi_j$, and the
$p_j$.

Suppose that $\sum_{j=1}^k p_j^{-1} \leq 1$.  Let $V$ be a bounded
set which contains $\supp (a)$.  Then by H\"{o}lder's inequality,
\[
    |S(f_1,\ldots,f_k)| \lesssim \prod_{j=1}^k {\Lp{f_j \circ \pi_j
    \cdot \chi_V}{p_j}}_{(\reals^n)} \lesssim \prod_{j=1}^k
    {\Lp{f_j}{p_j}}_{(\reals^{n-1})},
\]
where the last inequality follows from our assumption that $\pi_j$
is a submersion and by the boundedness of $V$.

Now suppose that $p_i < 1$ for some $1 \leq i \leq k$.  For $0 <
\delta \ll 1$, let $f_i = f_{\delta}$ be the characteristic function
of the ball of radius $\delta$ centered at 0 in $\reals^{n-1}$.  For
$j \neq i$, let $f_j$ be the characteristic function of the ball of
radius $1$ centered at 0 in $\reals^{n-1}$.  Then since $a(0) \neq
0$, and since $\pi_j$ is a submersion,
\[
    |S(f_1,\ldots,f_k)| \sim \delta^{n-1}
\]
and
\[
    \prod_{j=1}^k \Lp{f_j}{p_j} \sim \delta^{(n-1)/p_i}.
\]
Letting $\delta \to 0$, \eqref{stineq} cannot hold.

Henceforth, we will consider only those $k$-tuples
$p=(p_1,\ldots,p_k)$ satisfying
\begin{align} \label{p>1}
    \sum_{j=1}^k p_j^{-1}>1 \hspace{.5cm} \text{and} \hspace{.5cm} 1
    \leq p_j \leq \infty \hspace{.5cm} \text{for $1 \leq j \leq k$}.
\end{align}
We say that $S$ is $L^p$-improving if it satisfies \eqref{stineq}
for some $k$-tuple $p$ satisfying \eqref{p>1}.  This terminology is
motivated by the case when $k=2$ and the fibers of $\pi_1$ and
$\pi_2$ are transverse near 0.  In this case, \eqref{stineq} is equivalent to
boundedness of the operator $T$ defined by
\[
    Tf(x) = \int_{\pi_2^{-1}\{x\}}f(y)a(y)d\sigma_x(y)
\]
from $L^{p_1}(\reals^n)$ to $L^{p_2'}(\reals^n)$. (Here $d\sigma_x$ is equal to arclength measure on $\pi_2^{-1}\{x\}$.)  Additionally, \eqref{p>1} is equivalent to $p_2' > p_1$, and since
$L^{p_2'}(V) \subset L^{p_1}(V)$ for $V$ bounded, we think of $Tf$
as lying in a better space than $f$.

We will focus on establishing restricted weak type inequalities, i.e. in proving \eqref{stineq} in the case when each $f_j$ is the characteristic function of a Borel set.  We note that in order to prove that $S$ is of restricted weak type $(p_1,\ldots,{p_k})$, it is enough to show that whenever $\Omega \subset \reals^n$ is a Borel set, we have
\begin{align}\label{wkineq2}
\int \chi_{\Omega}(x)a(x)dx \lesssim |\pi_1(\Omega)|^{1/p_1} \cdots
|\pi_k(\Omega)|^{1/p_k},
\end{align}
with the implicit constant independent of $\Omega$.  This can be reformulated as a lower bound on $\Omega$ as follows.  We assume that $\Omega \subset \supp(a)$, and for $1 \leq j \leq k$, we define
\begin{align} \label{defalpha}
    \alpha_j := \frac{|\Omega|}{|\pi_j(\Omega)|}.
\end{align}
We observe that by the coarea formula, $\alpha_j$ is approximately equal to the average size (in terms of euclidean arclength) of the intersection of the fibres of $\pi_j$ with $\Omega$.  Additionally, under the assumption \eqref{p>1}, \eqref{wkineq2} is equivalent to the inequality 
\begin{align} \label{wkineq3}
    \alpha^b:= \alpha_1^{b_1} \cdots \alpha_k^{b_k} \lesssim
    |\Omega|,
\end{align}
where $b = b(p) = (b_1,\ldots,b_k)$ is defined by
\begin{align} \label{defb}
    b_i = \frac{p_i^{-1}}{\sum_{j=1}^k p_j^{-1}-1}, \quad 1 \leq i \leq k.
\end{align}
It is this inequality that we will try to prove.

\section{Vector fields and statement of results}
Associated to each of the submersions is a (nonunique) $C^{\infty}$
vector field $X_j$ on $\reals^n$ which is nonvanishing and tangent
to the fibers of $\pi_j$.

In \cite{CNSW}, Christ, Nagel, Stein, and Wainger proved that if
$k=2$, then $S$ is $L^p$-improving if and only if the $X_j$ and
their iterated Lie brackets span the tangent space to $\reals^n$ at
each point where $a \neq 0$.  In Section~\ref{cnswmlpf}, we will prove
the necessity portion of this theorem for $k > 2$:
\begin{theorem} \label{cnswml}
    If $S$ is $L^p$-improving, then the $X_j$ and their iterated Lie
    brackets span the tangent space to $\reals^n$ at each point
    where $a \neq 0$.
\end{theorem}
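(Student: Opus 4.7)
The plan is to argue the contrapositive. Suppose that at some $x_0$ with $a(x_0)\neq 0$, the vector fields $X_1,\dots,X_k$ together with all their iterated Lie brackets span only an $m$-dimensional subspace of $T_{x_0}\reals^n$ with $m<n$. I will construct, for each sufficiently small $\delta>0$, a Borel set $\Omega_\delta$ contained in a neighborhood of $x_0$ on which $a \sim 1$, satisfying
\[
|\Omega_\delta| \gtrsim \delta^{n-m'} \qquad\text{and}\qquad |\pi_j(\Omega_\delta)| \lesssim \delta^{n-m'}, \quad 1\leq j\leq k,
\]
for some $m' < n$. Feeding these estimates into the restricted weak-type reformulation \eqref{wkineq2} (which follows from \eqref{stineq}) gives
\[
\delta^{n-m'} \lesssim \int \chi_{\Omega_\delta}\,a \lesssim \prod_{j=1}^k |\pi_j(\Omega_\delta)|^{1/p_j} \lesssim \delta^{(n-m')\sum_j p_j^{-1}}.
\]
Since $\sum_j p_j^{-1} > 1$ by \eqref{p>1} and $n-m' \geq 1$, the right-hand side is a strictly higher power of $\delta$ than the left, which is impossible as $\delta \to 0$.

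The construction of $\Omega_\delta$ reduces to producing a smooth $m'$-dimensional submanifold $L$ through some point $x_1$ near $x_0$ (still with $a(x_1)\neq 0$) which is tangent to every $X_j$ along $L$. Given such $L$, set $\Omega_\delta$ to be the intersection of a fixed small neighborhood of $x_1$ with $\{x : \dist(x,L) < \delta\}$. The tubular neighborhood theorem yields $|\Omega_\delta| \sim \delta^{n-m'}$. For the projection bound, fix any $p \in L$: because $X_j(p)$ is nonvanishing, tangent to $L$, and spans the $1$-dimensional tangent to the fiber $\pi_j^{-1}\{\pi_j(p)\}$ at $p$, uniqueness of integral curves forces the whole fiber through $p$ to lie locally in $L$. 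Hence $\pi_j|_L$ has fibers of dimension at least one, so $\pi_j(L)$ has Hausdorff dimension at most $m'-1$ in $\reals^{n-1}$; its $O(\delta)$-neighborhood, which contains $\pi_j(\Omega_\delta)$, has measure $\lesssim \delta^{(n-1)-(m'-1)} = \delta^{n-m'}$, as desired.

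Producing the submanifold $L$ is the main obstacle. The natural candidate is a leaf of the Frobenius foliation of the distribution generated at each point by the $X_j$ and their iterated brackets, which is involutive by construction. Frobenius, however, requires locally constant rank. In the real-analytic category this is automatic after restriction to a nearby point (Nagano's theorem), but for $C^\infty$ vector fields the rank of this distribution can behave pathologically near $x_0$, with open regions of rank $n$ potentially accumulating onto $x_0$. One must exploit the lower semicontinuity and integer-valuedness of the rank function, together with the fact that $x_0$ lies in the closed set where the rank is $<n$, to locate a point $x_1$ nearby at which the rank is locally constant, still strictly less than $n$, and with $a(x_1) \neq 0$. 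This technical step is the multilinear analogue of the corresponding argument in \cite{CNSW} for the case $k=2$.
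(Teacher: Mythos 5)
Your global strategy---testing \eqref{wkineq2} against a family of sets $\Omega_\delta$ concentrated along a low-dimensional submanifold through a point where Hörmander's condition fails, and obtaining a contradiction from $\sum_j p_j^{-1}>1$ as $\delta\to 0$---is the same as the paper's, and your numerology (both $|\Omega_\delta|$ and $|\pi_j(\Omega_\delta)|$ comparable to $\delta^{n-m'}$) does close the argument once the submanifold $L$ is in hand. The gap is in the production of $L$, and it is not a repairable technicality: the ``find a nearby point of locally constant rank'' step is genuinely false in the $C^\infty$ category. Take $n=2$, $X_1=\partial_x$, $X_2=\partial_x + e^{-1/x^2}\partial_y$, with $\pi_1(x,y)=y$ and $\pi_2(x,y)=y-\int_0^x e^{-1/s^2}\,ds$. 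Both $X_j$ are nonvanishing, all iterated brackets at the origin lie in $\mathrm{span}\{\partial_x\}$ (rank $1$), yet the rank of the bracket-generated distribution equals $2$ at every point with $x\neq 0$. The set where the rank is $<n$ is the $y$-axis---a closed set with empty interior---so there is no $x_1$ near $0$ at which the rank is locally constant and $<n$, and there is no one-dimensional submanifold through (or near) the origin tangent to both $X_1$ and $X_2$. Lower semicontinuity of the rank gives only ``rank $\geq m$ nearby,'' which is the wrong direction; and the Sussmann/Hermann orbit through $x_0$ may have dimension strictly larger than $\dim\Delta(x_0)$, so the foliation picture simply does not exist here.

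The paper (following \cite{CNSW}) sidesteps this by never asking for an exact integral manifold. Instead one takes the candidate submanifold $M=\{\exp(t\cdot Y)(0)\}$ built from a basis $Y_1,\dots,Y_r$ of the bracket span \emph{at the single point} $0$, and proves via an iterated Baker--Campbell--Hausdorff computation that the flows $e^{tX_j}$ leave $M$ only to infinite order at $0$: in adapted coordinates $(x',x'')$ with $M=\{x''=0\}$, one has $\gamma_j''(x',0,t)=O(|x'|^{N+1}+|t|^{N+1})$ for every $N$. The test sets are then the ``lenses'' $M_{\delta,N}=\{|x'|<\delta,\ |x''|<C_N\delta^N\}$ rather than uniform tubular neighborhoods, and the exponent $N$ is chosen large depending on how close $\sum p_j^{-1}$ is to $1$. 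This quantitative, point-based flatness argument is exactly what replaces Frobenius in the smooth (non-analytic) setting, and it is the ingredient your proposal is missing. Your attribution of a constant-rank selection argument to \cite{CNSW} is also not accurate---their argument is of the BCH/infinite-order-invariance type.
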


The spanning of the tangent space by the iterated Lie brackets is known as the H\"{o}rmander condition.

The results of \cite{CNSW} characterized $L^p$-improving operators
but did not show for which pairs $(p_1,p_2)$ the inequality
\eqref{stineq} holds.  In \cite{TW} Tao and Wright determined, up to
endpoints, which pairs $(p_1,p_2)$ as in \eqref{p>1} satisfy
\eqref{stineq}.  A later proof of this theorem, along the lines of \cite{TW} but with some simplifications, is due to Christ in
\cite{ChRL}.  The bulk of this article will be devoted to showing
that the result of Tao and Wright extends to the case $k \geq 2$.

To state the result, we review a few definitions from \cite{TW}, generalized to the multilinear setting.

A {\it word} is a $d$-tuple $w \in \{1,\ldots,k\}^d$ for some $d \geq 1$, and $W$ denotes the set of all words.  If $w \in W$, its degree is the $k$-tuple whose $j$-th component is the number of entries of $w$ which equal $j$.  Finally, to each $w \in W$ is associated a vector field $X_w$, defined by the recursive equation
\[
    X_{(w,j)}:= [X_w,X_j].
\]
For example, if $k \geq 3$,
\[
    X_{(1,2)} = [X_1,X_2] \qquad X_{(1,2,3)} = [[X_1,X_2],X_3].
\]

Crucially, by antisymmetry and the Jacobi identity, each iterated Lie bracket
of the $X_j$ may be written as a linear combination (with constant coefficients) of vector fields
$X_w$ (cf.\ \cite[Lemma~4.4]{H}).  For instance,
\[
    [[X_1,X_2],[X_3,X_4]] = -X_{(1,2,4,3)}+X_{(1,2,3,4)}.
\]
Therefore the H\"{o}rmander condition is equivalent to
the statement that the vector fields $X_w$, with $w \in W$, span the
tangent space to $\reals^n$ at each point where $a \neq 0$.

Given an $n$-tuple $I=(w_1,\ldots,w_n)$ of words, we denote by $\lambda_I$ the determinant
\[
\lambda_I(x) := \det(X_{w_1}(x),\ldots,X_{w_n}(x)),
\]
and by $\deg(I)$ the $k$-tuple 
\[
\deg(I) := \deg(w_1) + \cdots + \deg(w_n).
\]
Finally, we recall one more definition before stating the main theorems.  The Newton polytope $P$ of the vector fields
$X_1,\ldots,X_k$ is the closed convex hull of the set of points
\[
    \{(b_1,\ldots,b_k) \geq \deg(I): \text{$I \in W^n$ and
    $\lambda_I(0) \neq 0$}\}.
\]
Here $(b'_1,\ldots,b'_k) \geq (b_1,\ldots,b_k)$ if $b'_i \geq b_i$ for
each $1 \leq i \leq k$.  Hence if $P \neq \emptyset$, then $P$ has
nonempty interior.  Recall the definition \eqref{defb} of $b(p)$ for
$p=(p_1,\ldots,p_k)$ satisfying \eqref{p>1}.  The next theorem
sharpens Theorem~\ref{cnswml}.

\begin{theorem} \label{nec}
    If $p \in [1,\infty]^k$ satisfies \eqref{p>1}, and $b(p)$ does
    not lie in $P$, then $S$ is not even of restricted weak-type
    $(p_1,\ldots,p_k)$.
\end{theorem}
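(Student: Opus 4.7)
The plan is to prove the contrapositive: given $b(p) \notin P$, I would construct a sequence of test sets $\Omega_\delta \subset \supp(a)$ violating the reformulated restricted weak-type inequality \eqref{wkineq3}, namely $\alpha^{b(p)} \lesssim |\Omega|$.

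First, I would apply a separating hyperplane argument. Since $P$ is the convex hull of a union of upper orthants of the form $\deg(I) + \reals_{\geq 0}^k$, it is closed, convex, and upward-closed in each coordinate. Because $b(p) \notin P$, the Hahn-Banach theorem produces a nonzero $c = (c_1, \ldots, c_k) \in \reals_{\geq 0}^k$ satisfying
\[
c \cdot b(p) < c \cdot \deg(I) \quad \text{for every } I \in W^n \text{ with } \lambda_I(0) \neq 0;
\]
the non-negativity of the coordinates of $c$ is forced by the upward closure of $P$. (If no such $I$ exists at all, the H\"{o}rmander condition fails and the conclusion follows from Theorem~\ref{cnswml}.)

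Second, I would build the $\Omega_\delta$ as flow-box images adapted to some fixed $I = (w_1, \ldots, w_n)$ with $\lambda_I(0) \neq 0$. For $\delta \in (0, 1)$, set $\eps_j := \delta^{c_j}$, and with $\eps^{\deg w} := \prod_j \eps_j^{(\deg w)_j}$, define
\[
\Omega_\delta := \bigl\{ \phi_{X_{w_1}}^{s_1} \circ \cdots \circ \phi_{X_{w_n}}^{s_n}(0) : |s_i| \leq \eps^{\deg w_i},\ 1 \leq i \leq n \bigr\}.
\]
Because $\lambda_I(0) \neq 0$, the Jacobian of the flow composition is nonvanishing at the origin, so for $\delta$ small the inverse function theorem yields $|\Omega_\delta| \sim \eps^{\deg(I)} = \delta^{c \cdot \deg(I)}$.

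Third, I would establish the matching lower bound $\alpha_j(\Omega_\delta) \gtrsim \eps_j$ for each $j$, equivalently $|\pi_j(\Omega_\delta)| \lesssim \eps^{\deg(I)}/\eps_j$. Geometrically, this says that every $\pi_j$-fibre meeting $\Omega_\delta$ contains an $X_j$-arc of length $\sim \eps_j$. If $X_j$ appears explicitly among the $X_{w_i}$, this is essentially built into the construction; otherwise one augments $\Omega_\delta$ by sweeping along the flow of $X_j$ for parameter $\lesssim \eps_j$, which preserves $\pi_j(\Omega_\delta)$ and inflates the measure by at most a constant factor. Once $\alpha_j \gtrsim \eps_j$ holds for every $j$, one obtains
\[
\frac{\alpha^{b(p)}}{|\Omega_\delta|} \gtrsim \frac{\eps^{b(p)}}{\eps^{\deg(I)}} = \delta^{c \cdot b(p) - c \cdot \deg(I)},
\]
and because the exponent is strictly negative this ratio tends to $\infty$ as $\delta \to 0^+$, contradicting \eqref{wkineq3}.

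The main obstacle is the third step, in particular the requirement that the sweepings in the $X_j$-directions be done simultaneously across all $j$ without degrading any $\alpha_{j'}$ already established. For $k = 2$ this is the content of \cite{TW, ChRL}; the new difficulty for $k > 2$ is that the flows $\phi_{X_j}^t$ for different $j$ do not commute, so their interleaving must be ordered carefully and commutator errors controlled by the leading-order flow-box estimates. This is the chief place where the multilinear setting departs from the bilinear one.
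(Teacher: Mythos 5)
Your outline—separate $b(p)$ from $P$, build a test set with $|\Omega_\delta| \sim \delta^{c\cdot\deg(I)}$ and $\alpha_j \gtrsim \delta^{c_j}$, then let $\delta \to 0$—matches the paper's strategy, but the step you yourself flag as "the main obstacle" is a genuine gap, not a detail. Your flow-box $\Omega_\delta$ does not come with the bound $\alpha_j \gtrsim \eps_j$. Even when some $w_i = j$, the $s_i$-slices of $\Omega_\delta$ are not $X_j$-arcs: they are images of $X_j$-arcs under the outer flows $\phi^{s_1}_{X_{w_1}}\cdots\phi^{s_{i-1}}_{X_{w_{i-1}}}$, which do not preserve $X_j$. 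Only the outermost slot gives genuine $X_j$-arcs, and there is only one outermost slot for $k$ directions. Your proposed fix—sweeping $\Omega_\delta$ by each $\phi^s_{X_j}$ in turn—does preserve each $\pi_j(\Omega_\delta)$ in turn, but the assertion that the sweeps "inflate the measure by at most a constant factor" is exactly the Carnot--Carath\'eodory doubling estimate (Lemmas~\ref{L:1}--\ref{L:5} and the corollary in Section~\ref{CCII}), and it does not follow from the leading-order flow-box Jacobian alone. This is precisely what the paper's map $\Phi$ from \eqref{defPhi} (with its extra scale $K$ and its $\delta$-dependent choice of $I_{x_0}$ from \eqref{Ix0}) is engineered to supply: the containment $e^{s\delta_j X_j}\Phi(B_{c_\eps}(0)) \subset \Phi(B_C(0))$ for every $j$ simultaneously, with comparable volume, which immediately yields $\alpha_j \gtrsim \delta_j$ for all $j$.

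Two further points. First, you fix a single $I$ with $\lambda_I(0)\neq 0$, but the correct $I$ depends on $\delta$: already for a Heisenberg-type model with both $\lambda_{(1,2,3)}\neq 0$ (degree $(1,1,1)$) and $\lambda_{(1,2,(1,2))}\neq 0$ (degree $(2,2,0)$), which flow-box is the right shape depends on the relative sizes of the $\eps_j$, so one must select $I$ by maximizing $\delta^{\deg I}|\lambda_I|$ as in \eqref{Ix0}. Second, Hahn--Banach only gives $c_j \geq 0$; you need $c_j > 0$ for all $j$ so that $\eps_j = \delta^{c_j}\to 0$. The paper obtains this by a continuity argument using the finiteness of the vertex set of $P$; your write-up should include it, since $c_j = 0$ makes $\eps_j = 1$ and the construction never localizes in the $j$-th direction.
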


The above theorem and the next one almost completely characterize
those $k$-tuples $p$ for which $S$ satisfies \eqref{stineq}.

\begin{theorem} \label{suff}
    If $p \in [1,\infty]^k$ satisfies \eqref{p>1} and $b(p)$ lies in
    the interior of $P$, then, provided $\supp(a)$ is contained in a
    sufficiently small neighborhood of 0, $S$ satisfies
    \eqref{stineq}, i.e. $S$ is of strong-type $(p_1,\ldots,p_k)$.
\end{theorem}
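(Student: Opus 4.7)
The plan is to prove the restricted weak-type inequality \eqref{wkineq3} whenever $b$ lies near a vertex of $P$, and then upgrade to the strong-type bound at every interior point of $P$ by multilinear real interpolation (as in the bilinear argument of \cite{TW}). Since every vertex of $P$ is dominated coordinatewise by some $\deg(I)$ with $\lambda_I(0) \neq 0$, it suffices to fix such an $n$-tuple of words $I = (w_1,\ldots,w_n) \in W^n$ and prove $\alpha^{\deg(I)} \lesssim |\Omega|$ for every Borel $\Omega \subset \supp(a)$, where $\alpha_j := |\Omega|/|\pi_j(\Omega)|$ as in \eqref{defalpha}.

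The proof uses the method of refinements from \cite{CCC, TW, ChRL}. First, pass to a subset $\Omega^* \subset \Omega$ with $|\Omega^*| \gtrsim |\Omega|$ in which every nonempty slice $\pi_j^{-1}(y) \cap \Omega^*$ has one-dimensional measure at least $c\alpha_j$ for each $j$. Next, for each word $w$ of length $d$, define a ``commutator flow'' $\Psi_w(s;\cdot)$ as a suitable alternating composition of forward and backward flows of the atomic $X_j$'s (with $j$ ranging over the letters of $w$), in such a way that $\Psi_w(s;x) = \exp(s X_w)(x) + O(s^{1+1/d})$ and that the macroscopic time $s$ factors as $s = \prod_j t_j^{(\deg w)_j}$ in the underlying atomic flow times $t_j$. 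Choosing $t_j \sim \alpha_j$ then yields $|s| \sim \prod_j \alpha_j^{(\deg w)_j}$. Fixing $x_0 \in \Omega^*$, form the composite map
\[
    \Phi(s_1,\ldots,s_n) := \Psi_{w_n}\bigl(s_n;\, \cdots \Psi_{w_1}(s_1;x_0)\cdots\bigr),
\]
with each $s_i$ ranging over an interval of length $\sim \prod_j \alpha_j^{(\deg w_i)_j}$.

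An inductive refinement run through the $n$ nested flow levels shows that on a subset of the parameter box of comparable measure, every intermediate point of the chain lies in $\Omega^*$, so $\Phi$ maps this subset into $\Omega$. The Jacobian $\det D_s \Phi|_{s=0}$ equals $\lambda_I(x_0)$ up to lower-order terms in the $s_i$; by continuity and the assumption $\lambda_I(0) \neq 0$, this is bounded below uniformly in $x_0$ once $\supp(a)$ is taken sufficiently small. The image therefore has measure at least
\[
    c\,|\lambda_I(0)| \prod_{i=1}^n \prod_{j=1}^k \alpha_j^{(\deg w_i)_j} = c\,|\lambda_I(0)|\,\alpha^{\deg(I)},
\]
and being contained in $\Omega$, this forces $\alpha^{\deg(I)} \lesssim |\Omega|$. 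The main obstacle is the orchestration of the nested refinements so that the guarantees established at one level survive the later $\Psi_{w_i}$-flows, and so that the accumulated error terms from the commutator approximations $\Psi_w(s;\cdot) \approx \exp(s X_w)$ remain too small to spoil the Jacobian lower bound. Handling these errors uniformly across $k$ atomic flow families and $n$ composition levels is the main combinatorial content added to the bilinear arguments of \cite{TW} and \cite{ChRL}.
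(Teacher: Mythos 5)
Your proposal takes a genuinely different route from the paper, and there is a real gap in it. The paper never forms commutator flows $\Psi_w(s;\cdot)\approx\exp(sX_w)$. Instead it uses $n$-fold compositions of \emph{atomic} flows, $\Phi_\bfj^n(t_1,\ldots,t_n)=e^{t_nX_{j_n}}\cdots e^{t_1X_{j_1}}(x_0)$ with each $j_i\in\{1,\ldots,k\}$, so the iterated Lie brackets never enter the map whose image is compared with $\Omega$. The engine making this work is the proposition from Section~\ref{CCII}: there exists $\bfj\in\{1,\ldots,k\}^n$ such that $|B_\bfj(x_0;\delta)|\sim|B(x_0;\delta)|\sim|\Lambda_\delta(x_0)|$, proved by pulling back by the rescaling map $\Phi_{x_0,K\delta}$ (Lemmas~\ref{L:1}--\ref{L:5}), Arzel\`a--Ascoli, and Nagel--Stein--Wainger. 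The Jacobian lower bound that you try to obtain by hand is then read off this volume comparison, and the final step — showing $|\Phi_\bfj^n(T_n)|\gtrsim\alpha_k^{C\eps}|B_\bfj(x_0;\delta)|$ — is done via the central-set polynomial lemma (Lemma~\ref{polyTn}) and Christ's Bezout/topological-degree argument (Lemma~\ref{lemma_7_1}) showing $\Phi_\bfj^n$ is $O(N)$-to-1 off a negligible set.

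The commutator-flow route has two obstacles that you flag but do not actually overcome, and both are fatal as stated. First, the error in $\Psi_w(s;x)=\exp(sX_w)(x)+O(s^{1+1/d})$ is built from higher brackets with \emph{mixed} atomic time-scales (e.g.\ a length-$2$ word $w=(i,j)$ contributes terms like $t_i^2t_j$ and $t_it_j^2$); with only the weak comparability \eqref{nondeg} available, and with no control on the pointwise size of the individual $X_w$, these can be of the same order as the principal displacement $s\,X_w$, so the asserted Jacobian lower bound at $s\neq 0$ is not justified. This is precisely what the $K$-rescaling, the pullback fields $Y_w$, and the NSW machinery in Sections~\ref{CCI}--\ref{CCII} are designed to control. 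Second, each $\Psi_{w_i}$ is a composition of on the order of $4^{|w_i|}$ forward-and-backward atomic flows; your refinement to a set $\Omega^*$ with slices of measure $\geq c\alpha_j$ does not keep that long chain of intermediate points inside any controlled set. The admissible flow times are typically Cantor-like rather than intervals, and the backward legs of a commutator take you out of $\Omega^*$ in a way that a single-pass slice refinement cannot prevent. The paper instead uses the iterated refinement $\Omega_0\subset\cdots\subset\Omega_{nk^n}$ producing \emph{central} sets (Lemma~\ref{TW8.2}), and needs exactly their anticoncentration property to run the polynomial approximation in the final step. Without a replacement for that structure and for the volume-comparison proposition, the concluding inequality $\alpha^{\deg I}\lesssim|\Omega|$ does not follow from what you have written.
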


We have as a corollary to Theorems~\ref{cnswml}, \ref{nec}, and~\ref{suff} the following.

\begin{cor}
    The operator $S$ is $L^p$-improving if and only if the iterated
    Lie brackets of the $X_j$ span the tangent space to $\reals^n$
    at each point where $a \neq 0$.
\end{cor}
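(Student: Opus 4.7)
The forward direction is immediate from Theorem~\ref{cnswml}. For the converse, the plan is to locate a $k$-tuple $p$ satisfying \eqref{p>1} whose associated $b(p)$ lies in the interior of the relevant Newton polytope, and then invoke Theorem~\ref{suff}.

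The first key observation will be that $P$ is upward-closed: as the closed convex hull of a union of upward orthants $\{b \geq \deg(I)\}$, it contains $b + t(1,\ldots,1)$ whenever it contains $b$ and $t \geq 0$. So if the H\"ormander condition holds at a point $x_0$, then some $\lambda_I(x_0) \neq 0$, giving $\deg(I) \in P(x_0)$, and the interior of $P(x_0)$ contains vectors with arbitrarily large coordinates. I would then parametrize by $p_1 = \cdots = p_k = p \in (1, k)$, which gives $b(p) = (1/(k-p), \ldots, 1/(k-p))$; as $p \to k^-$, each coordinate tends to infinity, so \eqref{p>1} is satisfied and $b(p)$ can be placed in the interior of $P(x_0)$.

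The technical wrinkle is that Theorem~\ref{suff} requires $\supp(a)$ to lie in a sufficiently small neighborhood of its base point, while the hypothesis only provides pointwise H\"ormander throughout $\supp(a)$. I would bridge this with a partition of unity: using compactness and continuity of the $\lambda_I$, cover $\supp(a)$ by finitely many open sets $U_\ell$ on each of which some fixed $\lambda_{I^{(\ell)}}$ is non-vanishing. Choosing $p$ sufficiently close to $k$ so that $b(p)$ exceeds every $\deg(I^{(\ell)})$ componentwise places $b(p)$ in the interior of the Newton polytope at every point of $\supp(a)$ simultaneously. Then write $a = \sum_\ell a_\ell$ with each $a_\ell$ supported in a sufficiently small neighborhood of some point of $U_\ell$, apply Theorem~\ref{suff} to each $S_{a_\ell}$ (after translating coordinates so that the base point lies at the origin), and sum the resulting strong-type bounds to obtain \eqref{stineq} for $S$ itself.

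The main obstacle I anticipate is just the coordinated bookkeeping in the partition-of-unity step — a single $p$ must serve at every base point, and each piece of $a$ must have small enough support for Theorem~\ref{suff} to apply — but these are both routine once the upward-closed structure of $P$ is exploited.
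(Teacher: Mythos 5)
Your proof is correct and follows the route implicit in the paper: the forward direction is Theorem~\ref{cnswml}, and the converse uses the upward-closure of $P$ (cf.\ \eqref{Pinfinite}) to place $b(p)$ in $\mathrm{int}\,P$ for a suitable $p$ near $(k,\ldots,k)$, then invokes Theorem~\ref{suff} together with the partition-of-unity remark preceding its proof. The only cosmetic difference is that you observe (correctly) that Theorem~\ref{nec} is not actually needed for the equivalence, even though the paper cites it alongside Theorems~\ref{cnswml} and~\ref{suff}.
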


We will give a more geometric formulation of Theorem~\ref{suff} later; see Theorem~\ref{thm:geometric}.

\section{Related work}

There is an extensive bibliography in \cite{TW} to which we direct the interested reader.  We will focus here on some more recent results.

\subsection*{Endpoint bounds.}  The main theorems here and in \cite{TW} and \cite{ChRL} do not establish boundedness of the operator $T$ at the Lebesgue endpoints (the points $p=(p_1,\ldots,p_k)$ with $b(p)$ equal to a vertex of the Newton polytope).  At least in the case of real analytic $\pi_j$, it is likely that the proof and not the operator is at fault for this omission.  See remarks in \cite[Section 11]{ChRL} for some conjectures and remarks related to endpoint bounds in the case $k=2$.

In a few special cases of the Tao--Wright theorem, strong-type endpoint bounds are known.  When $k=2$, one example of our operator is $S(f_1,f_2) := \langle Tf_1, f_2 \rangle$, where 
\[
Tf(x) := \int f(x-\gamma(t))\, a(x,t)\, dt.
\]
Here $\gamma:\reals \to \reals^n$ is a parametrized curve.  When $\gamma(t) = (t,\ldots,t^n)$, the author has proven endpoint bounds for $T$ in high dimensions in \cite{BSCCC}, sharpening the restricted weak-type result due to Christ in \cite{CCC} and extending lower dimensional results of Littman in \cite{Litt}, and Oberlin in \cite{Obe1}, \cite{Obe2}, \cite{Obe3}.  In another recent article \cite{DLW}, Dendrinos, Laghi, and Wright have established strong-type endpoint bounds for convolution with affine arclength measure along polynomial curves $\gamma$ in dimension 3, extending work of Oberlin in \cite{Obe4}, and this result was generalized to higher dimensions by the author in \cite{BSpoly_endpt}.  The work \cite{Laghi} of Laghi is in a similar vein.

Finally, in \cite{GressPolyEndpt} Gressman has settled the question of restricted weak-type boundedness at the endpoint in the polynomial case of the Tao--Wright theorem.  The arguments in \cite{GressPolyEndpt} extend to the multilinear setting with no difficulty.

\subsection*{Another multilinear operator.}  In \cite{BCW}, Bennett, Carbery and Wright have proved the following non-linear generalization of the Loomis--Whitney inequality:  If $k=n$ and the vector spaces $X_1,\ldots,X_n$ span the tangent space to $\reals^n$ at 0, then for $a$ having sufficiently small support (containing 0), $|S(f_1,\ldots,f_n)| \lesssim \prod_{j=1}^n \|f_j\|_{L^{n-1}(\reals^{n-1})}$.    The articles \cite{BB} and \cite{BHT} contain proofs of this generalization by induction on scales and establish the generalization of the Loomis--Whitney inequality under much lower regularity assumptions (though \cite{BB} is more general, addressing for instance certain cases wherein the fibres have unequal dimensions).  Our result gives a partial generalization of this bound (in the perturbed Loomis--Whitney case).  On the one hand, our methods cannot be used to obtain the endpoint $(n-1,\ldots,n-1)$ as this corresponds to $b = (1,\ldots,1)$.  On the other hand, even in the case $k=n$, our theorem establishes new bounds for certain $\pi_1,\ldots,\pi_n$.

\subsection*{Multi-parameter Carnot--Carath\'{e}odory Balls}  Independently of this work, in \cite{Street} Street has generalized the work of Nagel--Stein--Wainger and Proposition 4.1 of \cite{TW} for multi-parameter Carnot--Carath\'{e}odory balls, as well as analyzing the situation when the iterated Lie brackets span a proper subspace of $\reals^n$.  This work does not however address certain issues which we use to bound the multilinear operator.

\section{Proof of Theorem~\ref{cnswml}} \label{cnswmlpf}

In \cite{CNSW}, Christ, Nagel, Stein, and Wainger consider an
operator defined as in \eqref{defS} when $k = 2$ and the $\pi_j$ are
codimension $m$ submersions with $0 < m < n$.  It is rather simple to adapt the arguments in \cite{CNSW} to our circumstances to produce a short, relatively self-contained proof of Theorem~\ref{cnswml}.  We record this proof here both for the convenience of the reader and to provide some useful geometric intuition for later on.

\begin{proof}  Let $V$ be the vector space spanned by $\{X_w(0):w \in
W\}$, and let $r$ be its dimension.  Since each $X_j$ is nonzero, we
have that $r \geq 1$, and for the proof of the proposition, we may
assume that $r < n$.  Let $w_1,\ldots,w_r \in W$ be chosen so
$\{X_{w_1}(0), \ldots, X_{w_r}(0)\}$ is linearly independent, and
let $Y_j=X_{w_j}$.

For $t \in \reals^r$, define 
\[
t \cdot Y := \sum_{j=1}^r t_j Y_j, \quad \Gamma(t):= \exp(t \cdot Y)(0).
\]
Then $\Gamma$ has rank
$r$ at 0, so it is an embedding of a small neighborhood of 0 in
$\reals^r$ onto an $r$-dimensional submanifold $0 \in M \subset
\reals^n$.  Heuristically, the $X_j$ lie along $M$, so considering a
$\delta$-neighborhood $M_{\delta}$ of $M$, for each $1 \leq j \leq
k$, $\pi_j(M_{\delta})$ is of size $\delta^{n-r}$, which is
proportional to the size of $M_{\delta}$. Letting $\delta$ tend to
0, we obtain the requirement $\sum p_j^{-1} \leq 1$.
Unfortunately, this heuristic is misleading, since we only have
information about the Lie brackets of the $X_j$ at 0, and we will
have to use the techniques of \cite{CNSW} to obtain more
quantitative information; in the terminology of \cite{CNSW}, the submanifold $M$
will be invariant under the $X_j$ to infinite order at 0.

We will use a quantitative version of the Baker--Campbell--Hausdorff
formula, which is stated in \cite{CNSW} (for instance).  Let $V_1,\ldots,V_m$
and $W_1,\ldots,W_m$ be smooth vector fields on $\reals^n$.  Then
for $v,w \in \reals^m$, if we define $v \cdot V:= \sum v_jV_j$ and
$w \cdot W$ analogously, then
\begin{align*}
    \exp(v \cdot V) \exp(w \cdot W)(0) &= \exp(
    \sum_{k=1}^N c_k(v \cdot V,w \cdot W)) 
    + O(|v|^{N+1}+|w|^{N+1}).
\end{align*}
Here each $c_k$ is a homogeneous Lie polynomial of degree $k$.

For $s' \in \reals^r$ and $s'' \in \reals$ sufficiently close to 0
and $1 \leq j \leq k$, we define $G_j(s) = G_j(s',s'') =
\exp(s''X_j)\Gamma(s')$.  Since $\{X_w(0):w \in W\} \subset
\rm{span}\{Y_1,\ldots Y_r\}$, we may write
\[
    \sum_{k=1}^N c_k(s' \cdot Y, s'' X_j) = P_1^j(s) \cdot Y +
    \sum_{|\beta|=1}^N s^{\beta} W^j_{\beta,1},
\]
where $P_1^j$ is a vector-valued polynomial with $P_1^j(0) = 0$ and
each $W^j_{\beta,1}$ is in the span of $\{X_w:w \in W\}$ and
satisfies $W^j_{\beta,1}(0) = 0$.

We assume inductively that
\[
    G_j(s) = \exp(P_m^j(s) \cdot Y + \sum_{|\beta|=m}^N s^{\beta}
    W^j_{\beta,m})(0)+O(|s|^{N+1}),
\]
where $P_m^j$ is a vector-valued polynomial, $P_m^j(0)=0$, each
$W^j_{\beta,m}$ is in the span of $\{X_w:w \in W\}$, and
$W^j_{\beta,m}(0) = 0$.  The implicit constants in the $O(\cdot)$ notation are allowed to depend on $N$ (via the $C^N$ norms of the $X_j$).  Since $W^j_{\beta,m}(0) = 0$ for each $\beta, m$, 
\begin{align*}
    G_j(s) &= \exp(P_m^j(s) \cdot Y + \sum_{|\beta|=m}^N s^{\beta}
    W^j_{\beta,m}) \circ \exp(-\sum_{|\beta|=m}^N s^{\beta} W^j_{\beta,m})(0) +
    O(|s|^{N+1}).
 \end{align*}
 We apply the Baker--Campbell--Hausdorff formula and our inductive assumption $P_m^j(0) = 0$ to see that the right side equals the exponentiation of
  \begin{align*}
 & P_m^j(s) \cdot Y + \sum_{k=2}^N c_k(P_m^j(s) \cdot Y +
    \sum_{|\beta|=m}^N s^{\beta}W^j_{\beta,m},-\sum_{|\beta|=m}^N
    s^{\beta} W^j_{\beta,m}) \\
    &\qquad = P_m^j(s) \cdot Y + \sum_{|\beta|=m+1}^N
    s^{\beta}\tilde{W}^j_{\beta,m+1} + O(|s|^{N+1}).
 \end{align*}
Here the $\tilde{W}^j_{\beta,m+1}$ are in the span of $\{X_w:w \in W\}$ but do not necessarily vanish at zero.  We can add the ``error'' in the $\tilde{W}^j_{\beta,m+1}$ (the part that does not vanish at 0) to $P_m^j(s) \cdot Y$ and write
 \[
 G_j(s) = \exp(P_{m+1}^j(s) \cdot Y + \sum_{|\beta|=m+1}^N
    s^{\beta}W^j_{\beta,m+1})(0) + O(|s|^{N+1}),
\]
where $P_{m+1}^j$ is a vector-valued polynomial satisfying
$P_{m+1}^j(0) = 0$ and the $W^j_{\beta,m+1}$ are in the span of
$\{X_w:w \in W\}$ and satisfy $W^j_{\beta,m+1}(0) = 0$.  Proceeding
by induction,
\[
    G_j(s) = \exp(P_{N+1}^j(s) \cdot Y)+O(|s|^{N+1}),
\]
where $P_{N+1}^j(0)=0$, and we may assume that $P_{N+1}^j$ is of
degree less than $N+1$.

Consider coordinates $x = (x',x'') \in \reals^{r} \times
\reals^{n-r}$ on a neighborhood of 0 in $\reals^n$ so that $M =
\{x''=0\}$.  Define $\gamma_j(x,t):= e^{tX_j}(x)$.  We will also
write $\gamma_j = (\gamma_j',\gamma_j'')$ in the above coordinates.

Let $N$ be a fixed positive integer.  Since $\Gamma$ is a local
parametrization of $M$, and since $x''$ gives the distance from the
point $x$ to $M$, we have shown that for each $x',t$ sufficiently
close to 0 and each $1 \leq j \leq k$, 
\[
\gamma_j''(x',0'',t) = O(|x'|^{N+1}+|t|^{N+1}).
\]
More generally, since $\gamma_j(x,0) = x$,
\[
\gamma_j(x,t) = (x'+O(|t|),x''+O(|t||x''|) + O_N(|t|(|x'|^N+|t|^N))).
\]
Let
$\delta>0$.  Let $M_{\delta,N} = \{(x',x''):|x'|<\delta,
|x''|<C_N\delta^N\}$.

Then if $x \in M_{\delta,N}$, and $|t|<c\delta$, $\gamma_j(x,t) \in
M_{2\delta,N}$. Therefore $\delta|\pi_j(M_{\delta,N})| \lesssim
|M_{2\delta,N}|$, i.e., $|\pi_j(M_{\delta,N})| \lesssim
C_N\delta^{r-1}\delta^{N(n-r)}$. On the other hand, supposing $\sum
p_j^{-1} > 1$, we can find $N$ so that $r+N(n-r) < (\sum
p_j^{-1})((r-1)+N(n-r))$, and letting $\delta \to 0$ (with this $N$ fixed), $|M_{\delta,N}| \lesssim \prod
|\pi_j(M_{\delta,N})|^{1/p_j}$ cannot hold with a constant
independent of $\delta$.
\end{proof}

\section{Multi-parameter Carnot--Carath\'{e}odory Balls I}
\label{CCI}

We will spend much of Sections~\ref{CCI} and~\ref{CCII} reviewing properties of multi-parameter Carnot--Carath\'{e}odory balls which can be readily deduced from the work of Tao--Wright and Nagel--Stein--Wainger, though some new details will be needed.  An independent and far more in-depth discussion of these objects may be found in \cite{Street}.

If $\delta_1,\ldots,\delta_k$ are sufficiently small positive
numbers, and $x \in \reals^n$ is sufficiently close to 0, then we
define the multi-parameter Carnot--Carath\'{e}odory ball
$B(x;\delta_1,\ldots,\delta_k)$ to be the closure of the set of all
points
\begin{align*}
    e^{t_N \delta_{j_N}X_{j_N}}e^{t_{N-1} \delta_{j_{N-1}}
    X_{j_{N-1}}}\cdots e^{t_1 \delta_{j_1} X_{j_1}}(x),
 \end{align*}
 where $N \geq 0$ is an integer, $1 \leq j_i \leq k$, for each $1
    \leq i \leq N$, and $\sum_{i=1}^N|t_i| \leq 1$.

Essentially, this is the set of points which can be reached by
starting at $x$, then flowing first along one vector field
$\delta_{j_1}X_{j_1}$, then along another, and so on, for total time
less than or equal to 1.

We now give a heuristic proof of Theorems~\ref{nec} and~\ref{suff}.  We caution the reader that the equations in the following two paragraphs are not quite true or are false without additional assumptions, and require some proof at the very least.  

For Theorem~\ref{nec}, we consider $\Omega =
B(0;\delta_1,\ldots,\delta_k)$, with $0 < \delta_j \ll 1$.  Since
\[
    e^{t\delta_jX_j} \Omega \subset B(0;2\delta_1,\ldots,2\delta_k)
\]
whenever $|t| \leq 1$ and since
$$
    |B(x; 2\delta_1,\ldots,2\delta_k)| \sim |B(x;\delta_1,\ldots
    \delta_k)|,
$$
we have that $\frac{|\Omega|}{|\pi_j(\Omega)|} \gtrsim \delta_j$.  Thus if \eqref{wkineq3} holds, we must have that $\delta^b \lesssim |\Omega|$.  As 
$$
|B(x;\delta_1,\ldots,\delta_k)| \sim \sup_I\delta^{\deg(I)}|\lambda_I(x)|
$$
(roughly), we must have $b \in P$.

For Theorem~\ref{suff}, let $\Omega \subset \supp(a)$.  Then for $1 \leq j \leq k$, $\alpha_j$ represents the average of the 1-dimensional
Hausdorff measure of the set $\pi_j^{-1}\{y\}$ for $y \in
\pi_j(\Omega)$.  Hence we expect that for a `good' point $x \in
\Omega$ and for $1 \leq j \leq k$,
$$
    |\{t \in \reals:e^{tX_j}(x) \in \Omega\}| \sim \alpha_j.
$$
Iterating, $\Omega$ contains a set which `looks like' the
Carnot--Carath\'{e}odory ball $B(x;\alpha_1,\ldots,\alpha_k)$, so
whenever $\lambda_I(x) \neq 0$,
\begin{align} \label{lie6}
    \alpha^{\deg I} \sim \alpha^{\deg I}|\lambda_I(x)| \lesssim
    |B(x;\alpha_1,\ldots,\alpha_k)| \leq |\Omega|.
\end{align}
Thus, from the definition of $P$, we obtain \eqref{wkineq3}, and by
real interpolation, Theorem~\ref{suff}.

There are certain technicalities involved in the proof of Theorems~\ref{nec} and~\ref{suff}, for instance a correct analogue of
\eqref{lie6}, for which we will not be able to use the
Carnot--Carath\'{e}odory balls themselves.  The remainder of this
section will be devoted to translating Tao and Wright's discussion
of certain sets and mappings associated to the
Carnot--Carath\'{e}odory balls from the bilinear to the multilinear
setting.  We will return to the Carnot--Carath\'{e}odory balls
themselves in Section~\ref{CCII}.

We begin by reviewing some notation from \cite{TW}.  For the remainder of this section $\eps > 0$ will be a small
parameter, and $K$ will be a large parameter.  We will be more
specific about these quantities later on.  For the purposes of this
section, all implicit constants depend on $\eps$ and on the $\pi_j$.

We let $\delta_1,\ldots,\delta_k$ be positive numbers which satisfy the smallness and the nondegeneracy conditions
\begin{align} \label{small}
    \delta_j \leq c_{\eps,K}, \quad 1 \leq j \leq k\\
 \label{nondeg}
    \delta_i \lesssim \delta_j^{\eps}, \quad 1 \leq i,j \leq k.
\end{align}
We remark that the nondegeneracy condition is necessary for the balls
$B(x;\delta_1,\ldots,\delta_k)$ to satisfy the doubling property of \cite{NSW} and \cite[Ch.~1]{St}, for instance. Indeed, there is an example in \cite{ChRL} of a pair of vector fields
$X_1$ and $X_2$ which satisfy the H\"{o}rmander condition but have the property that there is no universal constant $C$ such that
\[
    |B(0;2\delta_1,2\delta_2)| \leq C|B(0;\delta_1,\delta_2)|
\]
for all sufficiently small $\delta_1,\delta_2$.

By Theorem~\ref{cnswml}, we may assume that there exists $I_0 \in
W^n$ with $\lambda_{I_0}(0) \neq 0$.  Shrinking $V$, we may in fact assume that $|\lambda_{I_0}| \sim 1$ throughout $V$.  We let $d:= \sum_{j=1}^k (\deg I_0)_j$, and define $\mathbf{I}$ to be the finite set
\begin{align} \label{bfI}
    \bfI := \{I \in W^n:(\deg I)_j \leq \tfrac{d}{\eps}, 1 \leq j
    \leq k\}.
\end{align}
If $I \notin \bfI$ and $\delta = (\delta_1,\ldots,\delta_k)$ is as above, then $(K\delta)^{\deg I} \lesssim (K\delta)^{\deg I_0}$.  We define $\Lambda = \Lambda_{K\delta}$ by
\begin{align} \label{defLambda}
    \Lambda(x) = ((K\delta)^{\deg I} \lambda_I(x))_{I \in \bfI}.
\end{align}
Since $|\Lambda(x)| \gtrsim (K\delta)^{\deg I_0}$, we have that
\begin{align} \label{lbLambda}
    (K\delta)^{\deg I}|\lambda_I(x)| \lesssim C_I|\Lambda(x)|,
\end{align}
for all $I \in W^n$ and $x \in V$.  

With $x_0 \in V$ fixed, we choose $I_{x_0} = (w_1,\ldots,w_n) \in \bfI$ so that
\begin{align} \label{Ix0}
    (K\delta)^{\deg I_{x_0}}|\lambda_{I_{x_0}}(x_0)| \sim
    |\Lambda(x_0)|.
\end{align}
Our goal is to gain a basic understanding of the mapping $\Phi = \Phi_{x_0,K\delta}$ defined by 
\begin{align} \label{defPhi}
    \Phi(t_1,\ldots,t_n) = \exp(\sum_{j=1}^n K^{-1}(K\delta)^{\deg
    w_j}t_jX_{w_j})(x_0)
\end{align}
for $t$ near 0 in $\reals^n$.  By smallness of the $K\delta_j$, the domain of definition of $\Phi$ may be taken to be uniform in $x_0\in V$ and $\delta$.

Since 
\begin{align} \label{detDPhi}
    |\det D\Phi(0)| = K^{-n}(K\delta)^{\deg
    I}|\lambda_{I_{x_0}}(x_0)| \sim |\Lambda(x_0)| \neq 0,
\end{align}
the mapping $\Phi$ is a diffeomorphism on a neighborhood $U$ of 0.  For $t \in U$ and $w \in W$, we define $Y_w$ to be the pullback by $\Phi$ of $K^{-1}(K\delta)^{\deg w}X_w$:
\begin{align} \label{defYw}
    Y_w(t) = (D\Phi(t))^{-1}[K^{-1}(K\delta)^{\deg w} X_w(\Phi(t))].
\end{align}
Although we are suppressing this in the notation, we keep in mind throughout that the map $\Phi$, the set $U$, and the vector fields $Y_w$ depend on the base point $x_0 \in V$ as well as $\delta$.

The following lemmas are proved in \cite{TW}.  Though the authors
only claim the results in the case $k = 2$, their proofs extend to
the multilinear case with almost no alteration.  For what follows,
all bounds are uniform in $K > C_{\eps}$,
$0<\delta_1,\ldots,\delta_k < c_{K,\eps}$ satisfying
\eqref{nondeg}, and the base point $x_0 \in V$, but may depend on $\eps$ and the $X_j$.

\begin{lemma} \label{L:1} If $B_r(0) \subset U$, for some $0<r \lesssim 1$, then
\begin{align} \label{Yisimdi}
    Y_{w_i}(t) = \partial_i + O(\tfrac{|t|}{K})
\end{align}
and in particular,
\begin{align} \label{detYi}
    |\det(Y_{w_1},\ldots,Y_{w_n})(t)| \sim 1
\end{align}
for all $t \in B_r(0)$.
\end{lemma}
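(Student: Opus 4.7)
The plan is to verify $Y_{w_i}(0)=\partial_i$ directly from the definitions, then control the variation $Y_{w_i}(t)-\partial_i$ on $B_r(0)$ by expanding the ``commutator correction'' that measures the gap between $D\Phi(t)\partial_i$ and $c_iX_{w_i}(\Phi(t))$, where $c_j:=K^{-1}(K\delta)^{\deg w_j}$. At $t=0$ the identity $D\Phi(0)\partial_i=c_iX_{w_i}(x_0)$ is immediate from \eqref{defPhi}, so \eqref{defYw} forces $Y_{w_i}(0)=\partial_i$; the determinant assertion will fall out of the main estimate.

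For general $t\in B_r(0)$, set $V_t:=\sum_j c_jt_jX_{w_j}$. Duhamel's formula for differentiating a flow in a parameter, combined with a Lie-derivative expansion of the resulting pushforwards, yields (with an $O((c_{\max}|t|)^{N+1})$ remainder at order $N$)
\begin{align*}
c_iX_{w_i}(\Phi(t)) - D\Phi(t)\partial_i = \sum_{m\geq 1} T_m(t)|_{\Phi(t)},
\end{align*}
where each $T_m(t)$ is a finite linear combination of terms of the form $c_ic_{j_1}t_{j_1}\cdots c_{j_m}t_{j_m}\cdot Z$, with $Z$ an iterated Lie bracket of $X_{w_i},X_{w_{j_1}},\ldots,X_{w_{j_m}}$, and, by \cite[Lemma~4.4]{H}, a constant-coefficient linear combination of vector fields $X_w$ having $\deg w=\deg w_i+\sum_p\deg w_{j_p}$. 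The crux is to re-express each such $X_w(\Phi(t))$ in the basis $\{X_{w_\ell}(\Phi(t))\}_{\ell=1}^n$: Cramer's rule gives $X_w(\Phi(t))=\sum_\ell a_\ell X_{w_\ell}(\Phi(t))$ with $a_\ell=\lambda_{I'(\ell)}(\Phi(t))/\lambda_{I_{x_0}}(\Phi(t))$, where $I'(\ell)\in W^n$ is obtained from $I_{x_0}$ by substituting $w$ in the $\ell$-th slot. Combining \eqref{lbLambda} (numerator) with \eqref{Ix0} (denominator) gives $|a_\ell|\lesssim(K\delta)^{\deg w_\ell-\deg w}$, and the prefactor $c_ic_{j_1}\cdots c_{j_m}=K^{-(m+1)}(K\delta)^{\deg w}$ exactly cancels the (potentially dangerous) negative powers of $K\delta$ in $a_\ell$, leaving $K^{-(m+1)}(K\delta)^{\deg w_\ell}=K^{-m}c_\ell$. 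Applying $(D\Phi(t))^{-1}$, which by \eqref{defYw} sends $c_\ell X_{w_\ell}(\Phi(t))$ to $Y_{w_\ell}(t)$, converts the display above into
\begin{align*}
Y_{w_i}(t)-\partial_i = \sum_{m\geq 1}\sum_\ell K^{-m}\,Q_{m,\ell}(t)\,Y_{w_\ell}(t)
\end{align*}
for polynomials $Q_{m,\ell}$ homogeneous of degree $m$ in $t$. A short bootstrap using $Y_{w_\ell}(0)=\partial_\ell$, continuity, and $|t|\leq r\lesssim 1$ then yields $|Y_{w_i}(t)-\partial_i|\lesssim|t|/K$ uniformly on $B_r(0)$ once $K$ is large. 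The determinant bound is then immediate: $\det(Y_{w_1},\ldots,Y_{w_n})(t)=\det(I_n+O(|t|/K))=1+O(r/K)\sim 1$.

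The main obstacle is the cancellation argument above; it is here that the choice of $I_{x_0}$ in \eqref{Ix0}, the rescaling $K^{-1}(K\delta)^{\deg w_j}$ built into \eqref{defPhi}, and the bound \eqref{lbLambda} combine to convert abstract Lie-bracket corrections into benign $K^{-m}$-multiples of the basis fields $Y_{w_\ell}$. Convergence of the series in $m$ uses only the smallness condition \eqref{small}, and the bootstrap is routine.
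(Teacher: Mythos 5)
Your proposal takes a genuinely different route from the paper's. The paper's sketch (drawing on \cite[Prop.~4.1]{TW}) is built around the radial identity $r\partial_r = \sum_j t_j Y_{w_j}$ and \eqref{YwYw'}: one computes the radial derivative of each $Y_{w_i}$ as a commutator, sees it is $O(K^{-1})$ times a bounded combination of $Y_{w}$'s, and closes via Gronwall's inequality, which simultaneously controls $\lambda_{I_{x_0}}\circ\Phi$ as a by-product (giving Lemma~\ref{L:2}). You instead expand the Jacobian of $\Phi$ via the Duhamel/$\frac{I-e^{-\mathrm{ad}_{V_t}}}{\mathrm{ad}_{V_t}}$ formula and fold the Lie brackets back into the $Y_{w_\ell}$-basis by Cramer's rule. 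The cancellation you identify — that $c_i c_{j_1}\cdots c_{j_m}=K^{-(m+1)}(K\delta)^{\deg w}$ exactly annihilates the $(K\delta)^{\deg w_\ell - \deg w}$ growth of the Cramer coefficient, leaving $K^{-m}c_\ell$ — is precisely the mechanism the paper encodes in \eqref{YwYw'}, and your reading of \eqref{defPhi}, \eqref{Ix0}, \eqref{lbLambda} is the right one.

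However, there are two gaps. First, your bound $|a_\ell|\lesssim(K\delta)^{\deg w_\ell-\deg w}$ needs a lower bound on the \emph{denominator} $|\lambda_{I_{x_0}}(\Phi(t))|$ at general $t\in B_r(0)$, but \eqref{Ix0} is a statement only at the basepoint $x_0=\Phi(0)$; the statement you need at $\Phi(t)$ is exactly \eqref{boundLambda}, i.e.\ Lemma~\ref{L:2}, which is being proved concurrently. This circularity is the crux of the lemma; the paper's Gronwall argument dissolves it automatically through continuation, whereas your "short bootstrap" at the end is aimed only at the estimate $|Y_{w_i}(t)-\partial_i|\lesssim|t|/K$ and never acknowledges that the identity it is bootstrapping is itself only valid after the $\lambda_{I_{x_0}}\circ\Phi$ bound is secured. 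You would need to run a genuine two-quantity continuity argument (on $\sup_{B_\rho(0)}|Y_{w_i}-\partial_i|$ and on $\inf_{B_\rho(0)}(K\delta)^{\deg I_{x_0}}|\lambda_{I_{x_0}}\circ\Phi|/|\Lambda\circ\Phi|$ simultaneously), and say so. Second, the assertion that "convergence of the series in $m$ uses only \eqref{small}" is not justified and is in tension with your earlier remark about an $O((c_{\max}|t|)^{N+1})$ remainder: for merely smooth $X_j$ the $\mathrm{ad}$-series does not converge, so you must work with a finite Taylor truncation plus remainder, and the remainder has to be pulled back through $(D\Phi(t))^{-1}$, whose operator norm is $\sim(K\delta)^{-\deg I_{x_0}}$ up to factors — large enough to overwhelm a naive $(c_{\max}|t|)^{N+1}$ estimate as $\delta\to0$. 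To make this work you would need to observe that the remainder is itself an integral over iterated $\mathrm{ad}_{V_t}$'s, hence enjoys the \emph{same} $K^{-m}c_\ell Y_{w_\ell}$-structure as the explicit terms; as written, this is not addressed.
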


\begin{lemma} \label{L:2}
    If $B_r(0) \subset U$ for some $0 < r \lesssim 1$, then
    \begin{align} \label{boundLambda}
        |\Lambda \circ \Phi(t)| \sim (K\delta)^{\deg
        I_{x_0}}|\lambda_{I_{x_0}} \circ \Phi(t)|
    \end{align}
    on $B_r(0)$.
\end{lemma}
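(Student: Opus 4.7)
The plan is to pull the inequality back through the diffeomorphism $\Phi$ and rephrase it as a uniform pointwise bound on the rescaled vector fields $Y_w$ from \eqref{defYw}. Directly from \eqref{defYw}, for any $n$-tuple $I=(w_1',\ldots,w_n')\in W^n$ one has the identity
\[
    \det\bigl(Y_{w_1'},\ldots,Y_{w_n'}\bigr)(t) \;=\; \frac{K^{-n}(K\delta)^{\deg I}\,\lambda_I(\Phi(t))}{\det D\Phi(t)}.
\]
Applied with $I=I_{x_0}$, together with \eqref{detYi}, this gives $(K\delta)^{\deg I_{x_0}}|\lambda_{I_{x_0}}\circ\Phi(t)|\sim K^n|\det D\Phi(t)|$ on $B_r(0)$, while \eqref{lbLambda} gives $|\Lambda\circ\Phi(t)|\sim\max_{I\in\bfI}(K\delta)^{\deg I}|\lambda_I\circ\Phi(t)|$. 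Thus \eqref{boundLambda} is equivalent to the uniform estimate
\[
    \bigl|\det(Y_{w_1'},\ldots,Y_{w_n'})(t)\bigr| \;\lesssim\; 1 \qquad \text{for all } (w_1',\ldots,w_n')\in\bfI,\ t\in B_r(0).
\]

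To prove this, I would expand each vector field $Y_w$ in the frame $\{Y_{w_1}(t),\ldots,Y_{w_n}(t)\}$, writing $Y_w=\sum_i a_{w,i}Y_{w_i}$ (well-defined by \eqref{detYi}), and reduce via multilinearity of the determinant to showing $|a_{w,i}(t)|\lesssim 1$ on $B_r(0)$ for each of the finitely many words $w$ appearing as a component of some $I\in\bfI$. At $t=0$, Cramer's rule combined with the pullback identity above expresses $a_{w,i}(0)$ as a ratio of the form $(K\delta)^{\deg I(i,w)}\lambda_{I(i,w)}(x_0)/[(K\delta)^{\deg I_{x_0}}\lambda_{I_{x_0}}(x_0)]$, where $I(i,w)$ is $I_{x_0}$ with $w_i$ replaced by $w$. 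The denominator is $\sim|\Lambda(x_0)|$ by \eqref{Ix0}, and the numerator is $\lesssim|\Lambda(x_0)|$ by \eqref{lbLambda}, so $|a_{w,i}(0)|\lesssim 1$.

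To propagate the bound to $t\in B_r(0)$, I would use that $\Phi$-pullback preserves Lie brackets and, by antisymmetry and the Jacobi identity, that $[Y_{w_i},Y_w]$ is $K^{-1}$ times a finite linear combination of vector fields $Y_{w''}$ of degree $\deg w_i+\deg w$ with universal constants. Expanding $[Y_{w_i},\sum_j a_{w,j}Y_{w_j}]$ using this and comparing coefficients in the frame yields a system expressing $Y_{w_i}a_{w,j}$ as $K^{-1}$ times a polynomial in finitely many coefficients $a_{w'',j'}$ of higher-degree words; combined with $Y_{w_i}=\partial_i+O(|t|/K)$ from \eqref{Yisimdi}, a Taylor expansion from $0$ gives $|a_{w,i}(t)-a_{w,i}(0)|\lesssim r/K$, provided these higher-degree coefficients are also uniformly bounded. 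The main obstacle is closing this bootstrap: iterating the bracket identity produces coefficients indexed by words of unbounded degree. I would handle it by truncating at a finite depth $N$ chosen large in terms of $\eps$ and $d$; the base-case bound at $t=0$ extends to words of degree at most $N$ by \eqref{lbLambda} (with a constant depending on $N$), and each of the $N$ bracket iterations contributes a factor of $K^{-1}$, so choosing $K\geq C_\eps$ sufficiently large absorbs all combinatorial constants and closes the induction. This is the multilinear analogue of the bootstrap in \cite{TW}, the adaptations being purely notational.
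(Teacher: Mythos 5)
Your reduction is correct and matches the paper's starting point: the pullback identity $\det(Y_{w_1'},\ldots,Y_{w_n'})(t)=K^{-n}(K\delta)^{\deg I}\lambda_I(\Phi(t))/\det D\Phi(t)$, combined with \eqref{detYi} and \eqref{lbLambda}, does reduce \eqref{boundLambda} to the uniform bound $|\det(Y_{w_1'},\ldots,Y_{w_n'})(t)|\lesssim 1$ on $B_r(0)$, and Cramer's rule combined with \eqref{lbLambda} and \eqref{Ix0} gives this at $t=0$. The propagation step, however, has a genuine gap. To apply a first-order ``Taylor expansion from $0$'' for $|a_{w,i}(t)-a_{w,i}(0)|$, you need a uniform bound on $\nabla a_{w,i}$ over all of $B_r(0)$, not just at the origin; by your bracket computation, $\nabla a_{w,i}$ involves the coefficients $a_{w'',l}(t)$ of words $w''$ of degree $\deg w+\deg w_j$, and you need those bounded \emph{on the whole ball}. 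Your proposed fix -- truncating at a finite depth $N$ and citing the $t=0$ bound from \eqref{lbLambda} for words up to degree $N$ -- does not close this: the bootstrap chain always passes control from lower-degree words to strictly higher-degree words, so the top layer of the truncated system never receives a bound away from $t=0$, and a continuity/open-closed argument built only on the $t=0$ data cannot be started. The factor $K^{-N}$ cannot absorb this, because the issue is not a size of constants but a missing hypothesis at nonzero $t$.

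The paper (following the sketch of Proposition 4.1 of Tao and Wright, to which it defers) uses two ingredients you omit which are precisely what close the argument. First is the exact radial identity $r\partial_r=\sum_j t_jY_{w_j}$, which holds \emph{at every point $t$}, not just at $0$ -- a consequence of the exponential definition \eqref{defPhi}. This converts the estimate into a scalar ODE for the coefficient vector along rays from the origin, so one never needs to bound gradients on the whole ball a priori. Second is Gronwall's inequality applied to this radial ODE, together with the observation -- coming from \eqref{YwYw'} and the nondegeneracy condition \eqref{nondeg}, which make $(K\delta)^{\deg w}$ dominated for words of high degree in any component -- that the higher-degree contributions appear with a $K^{-1}$ factor and decay, allowing the ODE to close. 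These two features replace the Taylor-plus-truncation scheme you propose and are not simply a notational variant of it; without them, the recursion is not finite and the bound does not propagate. I recommend revisiting the proof of Proposition 4.1 in \cite{TW} and incorporating the radial derivative identity explicitly before asserting the bootstrap closes.
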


\begin{lemma} \label{domainPhi}
    There exists $C \sim 1$ so that $B:= B_C(0) \subset U$.
\end{lemma}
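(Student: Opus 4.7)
The plan is a continuation argument. Let $R^* := \sup\{r>0 : B_r(0) \subset U\}$. The bound \eqref{detDPhi} combined with the inverse function theorem ensures $R^* > 0$; the goal is to show $R^* \gtrsim 1$ uniformly in $x_0 \in V$, $K > C_\eps$, and $\delta$ satisfying \eqref{nondeg}. I would argue by contradiction, supposing $R^* < c_0$ for a small absolute constant $c_0$ to be chosen.

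For every $r < R^*$ one has $B_r(0) \subset U$, so Lemma~\ref{L:1} applies on $B_r(0)$ and gives $Y_{w_i}(t) = \partial_i + O(|t|/K)$ and $|\det(Y_{w_1},\ldots,Y_{w_n})(t)| \sim 1$. By continuity these estimates extend to the closure $\overline{B_{R^*}(0)}$, and via \eqref{defYw} the second one becomes $|\det D\Phi(t)| \sim |\Lambda \circ \Phi(t)| > 0$ there, so $\Phi$ is a local diffeomorphism at every point of $\overline{B_{R^*}(0)}$.

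The key step is to upgrade this to a global diffeomorphism on a fixed-size ball. I would introduce the renormalized map
\[
\Psi(t) := D\Phi(0)^{-1}\bigl(\Phi(t)-x_0\bigr).
\]
From the identity $D\Phi(t)\,Y_{w_i}(t) = K^{-1}(K\delta)^{\deg w_i}X_{w_i}(\Phi(t))$, the smoothness of the $X_{w_i}$ on $V$ (which yields $K^{-1}(K\delta)^{\deg w_i}X_{w_i}(\Phi(t)) = D\Phi(0)\partial_i + O(|\Phi(t)-x_0|)\cdot K^{-1}(K\delta)^{\deg w_i}$), and the approximation $Y_{w_i}(t) = \partial_i + O(|t|/K)$, one obtains the crucial bound $D\Psi(t) = I + O(|t|/K)$ on $\overline{B_{R^*}(0)}$. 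Taking $K \geq C_\eps$ large enough so that the error is less than $1/2$ on $B_{c_0}(0)$, a Hadamard/contraction-mapping argument shows $\Psi$ is $(1+O(c_0/K))$-bi-Lipschitz on $B_{c_0}(0)$, hence globally injective there. Since $\Psi$ is merely a linear rescaling of $\Phi$, this says $\Phi$ itself is a diffeomorphism on $B_{c_0}(0) \supsetneq B_{R^*}(0)$, contradicting the maximality of $R^*$.

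The main obstacle is precisely the global-injectivity step: pointwise invertibility of $D\Phi$ does not by itself rule out self-intersections of $\Phi$ on a ball whose size is a priori unknown, and one cannot appeal to a naive Taylor expansion because $\|D\Phi(0)^{-1}\| \sim |\Lambda(x_0)|^{-1}$ is potentially huge. The resolution hinges on the near-Euclidean structure of the pullback vector fields furnished by Lemma~\ref{L:1}: once $K$ is taken large, the renormalized map $\Psi$ is a uniform small perturbation of the identity on $B_{c_0}(0)$, and the continuation is completed in a single step.
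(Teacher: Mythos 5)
Your overall strategy --- a continuation argument keyed to Lemma~\ref{L:1}, together with a renormalization $\Psi := D\Phi(0)^{-1}(\Phi - x_0)$ that reduces the injectivity question to a perturbation-of-the-identity statement --- is in the same spirit as the paper's sketch (``the third lemma follows from the first two and continuity''), and you correctly identify the real obstruction, namely that $\|D\Phi(0)^{-1}\| \sim |\Lambda(x_0)|^{-1}$ can be enormous so a naive Taylor bound fails. However, there is a genuine gap in the continuation step as you have written it. You derive $D\Psi(t) = I + O(|t|/K)$ only on $\overline{B_{R^*}(0)}$, because Lemma~\ref{L:1} requires $B_r(0) \subset U$. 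You then ``take $K$ large enough so that the error is less than $1/2$ on $B_{c_0}(0)$'' and conclude $\Phi$ is a diffeomorphism on $B_{c_0}(0)$. This applies the estimate on a strictly larger ball than the one on which it has been established; taking $K$ large cannot help on $B_{c_0}(0) \setminus \overline{B_{R^*}(0)}$ where no bound on $D\Psi - I$ is yet known. What your estimate actually yields is that $\Psi$ is bi-Lipschitz on the \emph{closed} ball $\overline{B_{R^*}(0)}$; you then need a further step --- e.g.\ by compactness plus the inverse function theorem near $\partial B_{R^*}(0)$, or an open-and-closed argument on $\{r : \Phi$ is a diffeomorphism on $B_r(0)$ and $\|D\Psi - I\|_{C^0(B_r)} < 1/2\}$ --- to push to $B_{R^*+\varepsilon}(0)$ and obtain the contradiction. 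This is a standard repair, but as stated the jump to $B_{c_0}(0)$ begs the question.

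A smaller point worth flagging: the derivation of $D\Psi(t) = I + O(|t|/K)$ is compressed. After left-multiplying $D\Phi(t)Y_{w_i}(t) = K^{-1}(K\delta)^{\deg w_i}X_{w_i}(\Phi(t))$ by $D\Phi(0)^{-1}$, the correction term $D\Phi(0)^{-1}\cdot O(|\Phi(t) - x_0|) \cdot K^{-1}(K\delta)^{\deg w_i}$ has components whose rows of $D\Phi(0)^{-1}$ carry a factor $\sim \left(K^{-1}(K\delta)^{\deg w_\ell}|\lambda(x_0)|\right)^{-1}$, so one must verify that the anisotropic ratios $(K\delta)^{\deg w_i - \deg w_\ell}$ together with the bound on $|\Phi(t)-x_0|$ really give the claimed smallness uniformly over $i,\ell$, rather than merely that the diagonal terms are controlled. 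The weaker and easier statement $D\Psi = I + o_K(1)$ on the closed ball already suffices for the Hadamard step, and I would recommend aiming for that.
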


\begin{lemma} \label{L:4}
    If $w \in W$, then
    \begin{align} \label{YwCM}
        \|Y_w\|_{C^M(B)} \leq C_{w,M},
    \end{align}
    provided $K$ is sufficiently large depending on $w$, $M$, and $\eps$.
\end{lemma}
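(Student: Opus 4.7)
The plan is to use Cramer's rule to expand $Y_w$ in the frame provided by Lemma~\ref{L:1} and then control the resulting coefficients by the maximality property of $I_{x_0}$. Since $\{X_{w_1}(x), \ldots, X_{w_n}(x)\}$ spans $\reals^n$ throughout $V$, Cramer's rule gives $X_w(x) = \sum_i (\lambda_{I^i_w}(x)/\lambda_{I_{x_0}}(x)) X_{w_i}(x)$, where $I^i_w$ denotes the tuple obtained from $I_{x_0}$ by replacing its $i$-th entry with $w$. Multiplying through by $K^{-1}(K\delta)^{\deg w}$, pulling back by $\Phi$, and using the elementary identity $\deg I^i_w - \deg I_{x_0} = \deg w - \deg w_i$ yields
\[
Y_w(t) = \sum_{i=1}^n a_i^w(t)\, Y_{w_i}(t), \qquad a_i^w(t) := \frac{(K\delta)^{\deg I^i_w}\lambda_{I^i_w}(\Phi(t))}{(K\delta)^{\deg I_{x_0}}\lambda_{I_{x_0}}(\Phi(t))}.
\]
The bound $|a_i^w(t)| \lesssim_w 1$ is immediate from the structure of $\Lambda$: the numerator is $\lesssim |\Lambda \circ \Phi(t)|$ by \eqref{lbLambda} (extended to all $I \in W^n$), while the denominator is $\sim |\Lambda \circ \Phi(t)|$ by Lemma~\ref{L:2}. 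Combined with the $C^0$ bound on the frame vectors from Lemma~\ref{L:1}, this gives $\|Y_w\|_{C^0(B)} \lesssim_w 1$ for \emph{every} $w \in W$.

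To upgrade to $C^M$, I would exploit the structural identity $[Y_v, Y_u] = K^{-1} Y_{(v,u)}$, which follows from the bilinearity of the Lie bracket together with the definition \eqref{defYw} and $[X_v, X_u] = X_{(v,u)}$. Since Lemma~\ref{L:1} yields $\partial_j = Y_{w_j} + O(|t|/K)$ as vector fields, differentiation in a coordinate direction reduces, modulo corrections of size $O(1/K)$, to applying $Y_{w_j}$, and hence via $\mathcal{L}_{Y_{w_j}}Y_w = [Y_{w_j},Y_w]$ to $K^{-1}Y_{(w_j,w)}$ plus a term involving the derivatives of $Y_{w_j}$ itself. The first piece is $C^0$-bounded by the previous paragraph (applied to the longer word $(w_j,w)$) and, crucially, carries a gain of $K^{-1}$. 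Iterating $M$ times bounds $\|Y_w\|_{C^M(B)}$ in terms of $C^0$ norms of $Y_v$ for words $v$ of length at most $|w|+M$, plus lower-order derivative norms of $Y_{w_j}$, with each step contributing a factor of $K^{-1}$. Closing the resulting induction then fixes how large $K$ must be in terms of $w$, $M$, and $\eps$.

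The main obstacle is the bookkeeping required to simultaneously control the $C^M$ norms for a growing family of words, and to handle the error terms $O(|t|/K)$ from Lemma~\ref{L:1} at each order. Equivalently, if one prefers to differentiate $a_i^w$ directly, the quotient rule produces terms with $\lambda_{I_{x_0}}\circ\Phi$ in the denominator, and although Lemma~\ref{L:2} provides a pointwise lower bound, the derivatives of $\lambda_{I_{x_0}}\circ\Phi$ must be expanded carefully (via the chain rule and a Baker--Campbell--Hausdorff expansion of $\partial_t\Phi$) so that each contribution inherits the cancellation provided by the maximality of $I_{x_0}$. I expect this step to follow the bilinear analogue in \cite{TW} closely, since the author notes that the arguments there extend to the $k$-linear setting with essentially no alteration.
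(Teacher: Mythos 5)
Your $C^0$ bound via Cramer's rule is correct and clean, and it matches what one needs for the base case. The difficulty is entirely in the $C^M$ upgrade, and there I believe the iteration as described does not close. Unwinding your reduction precisely, one obtains, for each coordinate direction $j$ and component $i$,
\[
\partial_j (Y_w)_i \;=\; K^{-1}\bigl(Y_{(w_j,w)}\bigr)_i \;+\; \sum_\ell (Y_w)_\ell\, \partial_\ell (Y_{w_j})_i \;-\; \sum_\ell (R_j)_\ell\, \partial_\ell (Y_w)_i ,
\]
where $R_j := Y_{w_j}-\partial_j = O(1/K)$ on $B$. The first and third terms behave exactly as you say: the first carries the $K^{-1}$ gain and is controlled by a $C^0$ bound on a longer word, and the third is an $O(1/K)$ perturbation of the quantity being estimated, so it can be absorbed once $K$ is large. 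The middle term is the problem: $\partial_\ell (Y_{w_j})_i$ is a \emph{first} derivative of a frame field, of exactly the same order as what you are bounding, and its coefficient $(Y_w)_\ell$ is only $O(1)$, with no $K^{-1}$ gain. It is not a lower-order term. Taking $w=w_i$ a frame word, the identity reduces to a compatibility relation $\partial_j (Y_{w_i})_m - \partial_i (Y_{w_j})_m = O(1/K)$ (plus controlled terms), and substituting one such relation into another produces a tautology rather than an absolute bound. So the resulting inequality is of the form $N_M \lesssim N_M + (\text{controlled})$, which cannot be closed by choosing $K$ large.

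The paper itself does not give a proof of this lemma; it refers to Proposition~4.1 of \cite{TW}. The device that makes the iteration close there --- and which the paper does sketch just after the lemma statements --- is the radial identity $r\partial_r = \sum_{j=1}^n t_j Y_{w_j}$, which is a consequence of the exponential form of $\Phi$. Bracketing against $Y_w$ gives the ODE along radial lines
\[
r\partial_r (Y_w)_i \;=\; (Y_w)_i - \sum_{j=1}^n (Y_w)_j\,(Y_{w_j})_i + K^{-1}\sum_{j=1}^n t_j\,\bigl(Y_{(w_j,w)}\bigr)_i,
\]
and after differentiating $M$ times one applies Gronwall on the finite radial interval $|t|\lesssim 1$. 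This is a genuinely different mechanism than the algebraic absorption you propose: the $O(1)$-coefficient terms (like $\sum_j (Y_w)_j (Y_{w_j})_i$ and its derivatives) are handled by integration rather than by a $K^{-1}$ smallness gain, while the $K^{-1}$ gain from \eqref{YwYw'} only serves to suppress the new, longer words appearing in the bracket. Your Cramer's-rule reduction of $Y_w$ to the frame fields $Y_{w_i}$ and the coefficients $a_i^w$ is a nice simplification and is compatible with this, but it does not by itself substitute for the radial ODE + Gronwall step. Your proposed alternative of differentiating $a_i^w$ directly runs into the same underlying issue and as written is too vague to assess; $D\Phi$ is not bounded below (its columns are comparable to $K^{-1}(K\delta)^{\deg w_i}X_{w_i}$), so the quotient-rule terms need the same delicate bookkeeping that the radial ODE packages cleanly.
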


\begin{lemma} \label{L:5}
    If $E$ is a measurable subset of $B$, then
    \begin{align} \label{PhiE}
        |\Phi(E)| \sim K^{-n}|\Lambda(x_0)||E|.
    \end{align}
\end{lemma}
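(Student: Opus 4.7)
The natural approach is change of variables: I would show that
\[
|\det D\Phi(t)| \sim K^{-n}|\Lambda(x_0)| \quad \text{for all } t \in B,
\]
and then invoke $|\Phi(E)| = \int_E |\det D\Phi(t)|\,dt$, which is valid since $\Phi$ is a diffeomorphism on $U \supset B$ by Lemma~\ref{domainPhi}.

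The Jacobian estimate itself is pleasant. The defining relation \eqref{defYw} rewrites as $D\Phi(t)\,Y_{w_i}(t) = K^{-1}(K\delta)^{\deg w_i}X_{w_i}(\Phi(t))$, so assembling these into an $n\times n$ matrix identity and taking determinants yields
\[
\det D\Phi(t)\cdot \det(Y_{w_1},\ldots,Y_{w_n})(t) = K^{-n}(K\delta)^{\deg I_{x_0}}\lambda_{I_{x_0}}(\Phi(t)).
\]
By \eqref{detYi} of Lemma~\ref{L:1} the left determinantal factor is $\sim 1$ on $B$, and by Lemma~\ref{L:2} the right-hand side is comparable to $K^{-n}|\Lambda\circ\Phi(t)|$ on $B$. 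So $|\det D\Phi(t)| \sim K^{-n}|\Lambda\circ\Phi(t)|$, and the task reduces to establishing the uniform comparison $|\Lambda\circ\Phi(t)| \sim |\Lambda(x_0)|$ for $t \in B$.

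The main obstacle is precisely this last comparison, since $|\Lambda(x_0)|$ itself can be extremely small (down to $(K\delta)^{\deg I_0}$), so a crude mean-value estimate using $\|\Lambda\|_{C^1}$ and the diameter of $\Phi(B)$ would be useless. The structural fact I would exploit is that the components of $\Lambda$ are determinants of vector fields with weights that match the way those fields transform under differentiation. Concretely, for a component $\tilde F_I(t) := (K\delta)^{\deg I}\lambda_I(\Phi(t))$, the chain rule and \eqref{defYw} give
\[
Y_{w_j}(\tilde F_I)(t) = K^{-1}(K\delta)^{\deg w_j+\deg I}(X_{w_j}\lambda_I)(\Phi(t)).
\]
Expanding $X_{w_j}\lambda_I$ by the product rule for the determinant and writing each directional derivative of a column $X_v$ along $X_{w_j}$ as $[X_{w_j},X_v]$ plus a remainder smooth in $x$, one obtains a finite linear combination of terms of the form $(K\delta)^{\deg I'}\lambda_{I'}(\Phi(t))$ with $\deg I' = \deg I + \deg w_j$. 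Each such term is bounded by $C|\Lambda\circ\Phi(t)|$ via \eqref{lbLambda}, with contributions from $I' \notin \bfI$ absorbed using $(K\delta)^{\deg I'} \lesssim (K\delta)^{\deg I_0}$ and the smallness conditions \eqref{small}. The prefactor $K^{-1}$ survives, yielding a differential inequality $|Y_{w_j}(\tilde F_I)(t)| \lesssim K^{-1}\sup_{s\in B}|\Lambda\circ\Phi(s)|$ on $B$. Because \eqref{Yisimdi} implies that the integral curves of the $Y_{w_j}$ sweep out $B$ in bounded time, a Gronwall-type argument then gives $|\Lambda\circ\Phi(t)| = (1+O(K^{-1}))|\Lambda(x_0)|$ uniformly on $B$, provided $K$ is sufficiently large (with Lemma~\ref{L:4} ensuring that the expansions of $Y_{w_j}\tilde F_I$ remain $C^M$-controlled throughout). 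Combining this with the Jacobian estimate and integrating over $E$ produces \eqref{PhiE}.
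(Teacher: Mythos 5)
Your proposal is correct and follows essentially the same argument as Proposition~4.1 of \cite{TW}, which is the paper's cited source for this lemma (the paper states Lemmas~\ref{L:1}--\ref{L:5} without proof, directing the reader to Tao--Wright, and sketches only the first three). You correctly identify the crux: after reducing via \eqref{detYi} and \eqref{boundLambda} to $|\det D\Phi(t)| \sim K^{-n}|\Lambda\circ\Phi(t)|$, the needed uniform comparison $|\Lambda\circ\Phi(t)| \sim |\Lambda(x_0)|$ on $B$ is not a consequence of the stated Lemmas~\ref{L:1}--\ref{L:4} alone but must be extracted from the same radial-derivative and Gronwall analysis that underlies them, exactly as in the Tao--Wright proof.
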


We will not repeat the proofs of these lemmas, for which we direct
the reader to Proposition 4.1 of \cite{TW}.  To offer some
explanation for the parameter $K$, however, we will sketch the
argument for the first three lemmas.

It is easy to compute $Y_{w_j}(0) = \partial_j$, and moreover, at each point $t \in \reals^n$, we have
\[
    r\partial_r:= \sum_{j=1}^n t_j \partial_j = \sum_{j=1}^n t_j
    Y_{w_j}.
\]
With these facts, and after some algebra and differential identities, we can compute
radial derivatives using Lie brackets with the $Y_{w_j}$.  The factor $K^{-1}$ in the definitions above helps minimize the contribution coming from higher order Lie brackets once we pull back (which has the effect of replacing $\delta$ with 1), since 
\begin{align} \label{YwYw'}
    [Y_w,Y_{w'}] = K^{-1} (D\Phi(t))^{-1}(K^{-1}(K\delta)^{\deg w+\deg w'} [X_w,X_{w'}](\Phi(t)))
\end{align}
for any $w,w' \in W$.  Together with Gronwall's inequality, the
bounds on the radial derivatives imply the first two lemmas.  The
third lemma follows from the first two and continuity, since
\[
    |\det D\Phi(t)| = \frac{K^{-n}(K\delta)^{\deg
    I_{x_0}}|\lambda_{I_{x_0}}(\Phi(t))|}{|\det(Y_{w_1},\ldots,
    Y_{w_n})(t)|}.
\]
We note that \eqref{YwYw'} would hold if we replaced each instance
of $K\delta$ with $\delta$ in the above discussion, but if we did
that, unless each of the $w_i$ was actually in $\{1,\ldots,k\}$,
$\Phi(B_1(0))$ would be much smaller than
$B(x_0;\delta_1,\ldots,\delta_k)$ for large $K$.

\section{Proof of Theorem~\ref{nec}} \label{necpf}

Here we adapt Tao and Wright's proof that the bound \eqref{wkineq2}
can only hold if $b(p)$ (see \eqref{defb}) lies in the Newton
polytope $P$.

We first record some geometric properties of $P$.  By definition,
$P$ is convex and
\begin{align} \label{Pinfinite}
    \text{$b \in P$ and $b'\geq b$ implies $b' \in P$.}
\end{align}
Moreover, by Theorem~\ref{cnswml}, we may assume that $P \neq
\emptyset$.  Since the vertices of $P$ are $k$-tuples of
non-negative integers, by \eqref{Pinfinite} $P$ has only finitely
many vertices.  It is also clear from the definition that $b \in P$
implies that $\sum_{j=1}^k b_j \geq n$.

Now suppose that $p$ satisfies \eqref{p>1} and that $b(p)$ does
not lie in $P$.  Then there exists $a \in \reals^k$ and $d \in
\reals$ so that
\[
    b(p) \in \{b \in \reals^k:a \cdot b < d \} =: H_-, \quad 
    P \subset \{b \in \reals^k:a \cdot b > d\} =: H_+.
\]
From \eqref{Pinfinite}, each entry of $a$ is non-negative, and since
$b(p) \in [0,\infty)^k$, $d>0$.  Since $P$ has finitely many vertices,
and since each vertex lying in $H_+$ implies that $P$ lies in $H_+$,
we may assume that each entry of $a$ is positive by continuity.
Finally, by scaling, we may assume that $d=1$.

Let $\delta_0 > 0$.  Then since $a \in (0,\infty)^k$,
\begin{align} \label{defdelta}
    \delta:=(\delta_0^{a_1},\ldots,\delta_0^{a_k})
\end{align}
satisfies \eqref{nondeg} for some $\eps > 0$, independent of
$\delta_0$.  Shrinking $\eps$ if needed, we may assume that the set
$\bfI$ defined in \eqref{bfI} contains all of the vertices of $P$
and that
\begin{align} \label{adotb}
    a \cdot b(p) < 1-\eps  \qquad     a \cdot b > 1+\eps
\end{align}
for each $b \in P$.

We will use the results of Section~\ref{CCI} to show that $S$ is not of
restricted weak-type $(p_1,\ldots,p_k)$.

With $x_0 = 0$, choose $K$ large enough that \eqref{boundLambda} and
\eqref{PhiE} hold and \eqref{YwCM} holds with $w=1,\ldots,k$ and
$M=0$ on $B=B_C(0)$, whenever $\delta$ satisfies \eqref{small} and
\eqref{nondeg}.

Let $0 < c_{\eps} <C$ be sufficiently small for later purposes, and
let
\[
    \Omega = \Phi(B_{c_{\eps}}(0)).
\]
If $x = \Phi(t) \in \Omega$, $s \ll 1$, and $1 \leq j \leq k$, then
\[
    e^{s \delta_j X_j}(x) = \Phi(e^{sY_j}(t)) \subset \Phi(B_C(0))
\]
by \eqref{YwCM} and the smallness of $c_{\eps}$.  Hence
\[
    \frac{|\Omega|}{|\pi_j(\Omega)|} \gtrsim \delta_j.
\]
Thus \eqref{wkineq3} implies that
\[
    \delta^{b(p)} \lesssim |\Omega|.
\]
But by \eqref{PhiE},
\[
    |\Omega| \lesssim K^{-n}|\Lambda(0)| \lesssim C_K\delta^{b_0}
\]
for some $b_0 \in P$.  Hence (as $K$ depends on $\eps$), by \eqref{adotb} we have
\[
    \delta_0^{1-\eps} < \delta_0^{b(p) \cdot a} \lesssim C_K
    \delta_0^{b_0 \cdot a} \lesssim C_{K}\delta_0^{1+\eps}.
\]
Letting $\delta_0 \to 0$, we obtain a contradiction.

\section{Multi-parameter Carnot--Carath\'{e}odory Balls II}
\label{CCII}

In this section, we return to the multi-parameter
Carnot--Carath\'{e}odory balls $B(x;\delta_1,\ldots,\delta_k)$,
defined in Section~\ref{CCI}.  We will obtain estimates for the volumes
of these balls, and will use these estimates to give alternate
statements to Theorems~\ref{nec} and~\ref{suff}.  We will also prove
that, under the assumptions \eqref{small} and \eqref{nondeg}, the balls satisfy the doubling
property.  As mentioned earlier, most of the needed results can be obtained by translating existing results from the bilinear to the multilinear setting.

As before, we let $V$ be a small neighborhood of 0, and let
$X_1,\ldots,X_k$ be smooth vector fields defined on $\reals^n$.  We
assume that there exists $I_0 \in W^n$ so that $|\lambda_{I_0}(x)|
\sim 1$ for all $x \in V$.  All implicit constants in this section
depend on $\eps > 0$ and the vector fields $X_1,\ldots, X_k$.

If $\delta = (\delta_1,\ldots,\delta_k)$ is a $k$-tuple of positive
numbers satisfying \eqref{nondeg}, we let $\Lambda_{\delta}$ be
defined as in \eqref{defLambda}, with $K = 1$.

\begin{prop}
    There exists a constant $C_{\eps}>1$ such that
    \[
        |B(x;\delta)| \lesssim C_{\eps}|\Lambda_{\delta}(x)|,
    \]
    whenever $x \in V$ and $\delta = (\delta_1,\ldots,\delta_k)$ is a $k$-tuple of
    positive numbers satisfying \eqref{nondeg} and $\delta_i <
    C_{\eps}^{-1}$.
\end{prop}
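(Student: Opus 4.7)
The plan is to use the framework of Section~\ref{CCI} to show that, after a suitable rescaling of $\delta$, the Carnot--Carath\'{e}odory ball $B(x;\delta)$ is contained in $\Phi_{x,K\delta/\eta}(B_C(0))$, where $B_C(0)$ is the Euclidean ball from Lemma~\ref{domainPhi}. Once this containment is established, Lemma~\ref{L:5} yields $|B(x;\delta)| \lesssim K^{-n}|\Lambda_{K\delta/\eta}(x)|$. Since each component of $\Lambda_{\delta'}$ is a monomial in $\delta'$ indexed by the finite set $\bfI$, we have $|\Lambda_{K\delta/\eta}(x)| \leq (K/\eta)^D |\Lambda_\delta(x)|$ where $D := \max_{I \in \bfI}\sum_j(\deg I)_j$, and the proposition then follows with $C_\eps = K^{D-n}\eta^{-D}$.

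The key observation underlying the containment is that, by \eqref{defYw} applied with $w=(j)$, the vector field $Y_j$ associated to $\Phi_{x,K\delta'}$ is precisely the $\Phi_{x,K\delta'}$-pullback of $\delta'_j X_j$. Hence flowing along $e^{t\delta'_j X_j}$ from a point $\Phi(s)$ corresponds, in pullback coordinates, to flowing along $e^{tY_j}$ from $s$, provided both flows remain in the domain of $\Phi$. Choosing $K=K(\eps)$ large enough for Lemma~\ref{L:4} to apply to $w=(1),\ldots,(k)$ with $M=0$ gives a uniform bound $\|Y_j\|_{C^0(B)}\leq M(\eps)$, so that starting from $0$ and iterating flows of $Y_{j_i}$ with total time $\tau$ keeps the trajectory inside the Euclidean ball of radius $\tau M(\eps)$.

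The principal obstacle is that Lemma~\ref{L:4} only asserts $M(\eps)$ is bounded, not that it is smaller than the radius $C$ from Lemma~\ref{domainPhi}, so we cannot directly take $\tau=1$. The resolution, which is the main technical trick, is to rescale: the substitution $s_i=\eta t_i$ in the definition of $B(x;\delta)$ gives
\[
B(x;\delta) = \bigl\{e^{s_N(\delta_{j_N}/\eta)X_{j_N}}\cdots e^{s_1(\delta_{j_1}/\eta)X_{j_1}}(x) : \textstyle\sum_i|s_i|\leq \eta\bigr\},
\]
so we may work with $\delta':=\delta/\eta$ and total time $\eta$ in place of total time $1$. Setting $\eta:=C/M(\eps)$ (and shrinking the smallness constant $C_\eps^{-1}$ governing $\delta_i$ so that $\delta/\eta$ still satisfies the hypotheses \eqref{nondeg} and \eqref{small} needed to apply the lemmas of Section~\ref{CCI}), the entire trajectory remains inside $B_C(0)$, and the desired containment follows.
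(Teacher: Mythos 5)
Your proposal is correct and follows essentially the same strategy as the paper's proof: pull back the flows along $\delta_j X_j$ to the unit-scale fields $Y_j$ via $\Phi$, use the uniform $C^0$ bound from Lemma~\ref{L:4} to keep the trajectories inside the domain $B_C(0)$, and then apply Lemma~\ref{L:5} to convert the resulting containment into a volume bound. The only cosmetic difference is the placement of the rescaling: the paper first proves $|B(x;c_\eps\delta)| \lesssim C_\eps|\Lambda_\delta(x)|$ and implicitly replaces $\delta$ by $\delta/c_\eps$ at the end, whereas you set $\delta' = \delta/\eta$ up front; the two reduce to the same bookkeeping using that $\Lambda_{\delta}$ is monomial in $\delta$ with bounded degree.
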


\begin{proof}  Let $K=K_{\eps}$ be sufficiently large that the conclusions of Lemmas~\ref{L:1}--\ref{L:5} hold (with $M=0$ in Lemma~\ref{L:4}), uniformly in $x \in V$ and $\delta$ satisfying \eqref{small} and \eqref{nondeg}.  (Note that $c_{K,\eps}$ is indirectly a function of $\eps$ alone.)  Henceforth $x$, $\delta$ will be fixed, with $\Phi$ and the $Y_w$ defined accordingly.

Define $\tilde{B}(0;c_{\eps},\ldots,c_{\eps})$ to be the 
closure of the set of all points
\begin{align*}
    e^{t_NY_{j_N}} \cdots e^{t_1 Y_{j_1}}(0),
\end{align*}
where $N$ is a non-negative integer, $1 \leq j_i \leq k$ for $1 \leq i \leq N$ and $\sum_{i=1}^N|t_i| \leq c_{\eps}$.  

Then, if $c_{\eps}$ is sufficiently small depending on the
$C^0$ norms of $Y_1,\ldots,Y_k$, 
we have that $\tilde{B}(0;c_{\eps},\ldots,c_{\eps})$
is contained in $B$ (the domain of $\Phi$).  Since $Y_i$ is the
pullback by $\Phi$ of $\delta_i X_i$,
\[
    \tilde{B}(0;c_{\eps},\ldots,c_{\eps}) = \Phi^{-1}(B(x;
    c_{\eps}\delta_1,\ldots,c_{\eps}\delta_k)).
\]
Thus, by \eqref{PhiE},
\[
    |B(x;c_{\eps}\delta_1,\ldots,c_{\eps}\delta_k)| \lesssim
    K^{-n}|\Lambda_{K\delta}(x)| \cdot |B| \sim
    C_{\eps}|\Lambda_{\delta}(x)|.
\]
\end{proof}

\begin{prop}
   There exists
    $C_{\eps}>1$ such that whenever $x \in V$ and $\delta$ obeys \eqref{nondeg} and $\delta_i \leq C_{\eps}^{-1}$, we have
    \[
        |B(x;\delta)| \geq C_{\eps}^{-1}|\Lambda_{\delta}(x)|.
    \]
    Furthermore, there exists a sequence $\bfj = (j_1,\ldots,j_n)
    \in \{1,\ldots,k\}^n$ so that
    \begin{align} \label{lbBj}
        |B_{\bfj}(x;\delta)| \geq
        C_{\eps}^{-1}|\Lambda_{\delta}(x)|,
    \end{align}
    where $B_{\bfj}(x;\delta)$ is defined to be the closure of the
    set
    \[
        \{e^{t_n \delta_{j_n}X_{j_n}} \cdots e^{t_1 \delta_{j_1}
        X_{j_1}}(x):  |t_i| \leq  1, 1 \leq i \leq n\}.
    \]
\end{prop}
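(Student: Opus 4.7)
Both claims will be addressed via the flow-box map
\[
\Psi_\bfj(t_1,\ldots,t_n) = e^{t_n \delta_{j_n} X_{j_n}} \cdots e^{t_1 \delta_{j_1} X_{j_1}}(x_0)
\]
and the parameterization $\Phi = \Phi_{x_0, K\delta}$ from Section~\ref{CCI}. Fix $K = K_\eps$ large enough that Lemmas~\ref{L:1}--\ref{L:5} apply, so by Lemma~\ref{L:5},
\[
|\Phi(B)| \sim K^{-n}|\Lambda_{K\delta}(x_0)||B| \sim C_\eps^{-1}|\Lambda_\delta(x_0)|,
\]
the last comparability using that degrees appearing in $\bfI$ are bounded (by $kd/\eps$), so $|\Lambda_{K\delta}(x_0)|$ differs from $|\Lambda_\delta(x_0)|$ only by an $\eps$-dependent factor.

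For the first claim, the plan is to show $\Phi(B) \subset B(x_0; C_\eps\delta)$; after rescaling $\delta \to \delta/C_\eps$ (and using that $|\Lambda_{\delta/C_\eps}| \gtrsim C_\eps^{-O(1)}|\Lambda_\delta|$) this yields the stated bound. To prove the inclusion, given $t \in B$ we express $\Phi(t)$ as a composition of basic flows $e^{s_i \delta_{j_i} X_{j_i}}$ with $\sum |s_i| \lesssim_\eps 1$, via two standard ingredients: (i)~a product formula decomposing the single exponential $\exp(\sum_j K^{-1}(K\delta)^{\deg w_j}t_j X_{w_j})$ into an ordered product of exponentials $\exp(K^{-1}(K\delta)^{\deg w_j}t_j X_{w_j})$, with error absorbable into further brackets; (ii)~for each factor, a quantitative Baker--Campbell--Hausdorff expansion writing $\exp(\tau X_w)$ as a nested commutator of basic flows with time parameters $\sim \tau^{1/|w|}/\delta_{j'}$, which under the nondegeneracy condition \eqref{nondeg} remain $\lesssim_\eps 1$.

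For the second claim, the plan is to construct a length-$n$ sequence $\bfj$ based on the structure of $I_{x_0} = (w_1,\ldots,w_n)$. Each partial derivative $\partial_{t_i}\Psi_\bfj(t) = \delta_{j_i}\tilde X_i(\Psi_\bfj(t))$, where $\tilde X_i$ is the pushforward of $X_{j_i}$ through the subsequent flows, expands via BCH as $\tilde X_i = \sum_w P_i^w(t,\delta) X_w$ for polynomial coefficients $P_i^w$. Multiplying out,
\[
\det D\Psi_\bfj(t) = \delta_{j_1}\cdots\delta_{j_n} \sum_{I \in W^n} P_I(t,\delta)\lambda_I(\Psi_\bfj(t)).
\]
The sequence $\bfj$ is chosen so that the term $I = I_{x_0}$ dominates on a subset $R \subset [-1,1]^n$ of measure $\gtrsim 1$ with contribution $\gtrsim \delta^{\deg I_{x_0}}|\lambda_{I_{x_0}}(x_0)| \sim C_\eps^{-1}|\Lambda_\delta(x_0)|$. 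Combined with a bounded-multiplicity argument for $\Psi_\bfj|_R$, this gives $|B_\bfj(x_0;\delta)| \geq |\Psi_\bfj(R)| \gtrsim C_\eps^{-1}|\Lambda_\delta(x_0)|$.

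The main obstacle will be the combinatorial choice of $\bfj$: one needs an ordering of letters drawn from the words $w_1,\ldots,w_n$ so that the BCH expansion of $\det D\Psi_\bfj$ produces $\lambda_{I_{x_0}}$ with a non-vanishing polynomial coefficient on $R$, while simultaneously $\Psi_\bfj|_R$ has bounded multiplicity. This requires careful tracking of which iterated brackets arise as pushforward corrections under nested flows, and matching the combinatorics of $\bfj$ to the bracket structure of $I_{x_0}$.
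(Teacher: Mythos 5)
Your proposal correctly identifies, at the very end, the central difficulty: constructing a sequence $\bfj$ from the structure of $I_{x_0}$ so that the corresponding term dominates in $\det D\Psi_{\bfj}$, while simultaneously controlling multiplicity. But the proposal does not resolve this difficulty — it names it and stops. That is precisely the step where a direct, constructive approach breaks down in the multilinear setting. The paper says this explicitly: the constructive argument in \cite{TW} ``seems more specialized to the bilinear case, as it uses the fact that there are only two possibilities for $\bfj$, namely $(1,2,1,\ldots)$ and $(2,1,2,\ldots)$.'' For $k>2$ there are $k^n$ candidate sequences and no evident combinatorial rule for extracting a winner from $I_{x_0}$; absent such a rule, your outline is not a proof.

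The paper takes a genuinely different, nonconstructive route that avoids this combinatorics entirely. It argues by contradiction and compactness: suppose along some sequence $(x^{(\ell)},\delta^{(\ell)})$ with $\delta^{(\ell)}\to 0$ the quantities $|[\Phi^{(\ell)}]^{-1}(B_{\bfj}(x^{(\ell)};\delta^{(\ell)}))|$ tend to $0$ for \emph{every} $\bfj\in\{1,\ldots,k\}^n$. By Lemma~\ref{L:4} the pulled-back fields $Y_i^{(\ell)}$ are uniformly bounded in $C^M(B)$, so Arzela--Ascoli gives a subsequence converging in $C^M$ to limit vector fields $Y_i$; Lemmas~\ref{L:1}--\ref{L:2} (and finiteness of $\bfI$) ensure the limit configuration is nondegenerate, i.e.\ $|\det(Y_{w_1},\ldots,Y_{w_n})|\sim 1$ on $B$ for a fixed $n$-tuple $(w_1,\ldots,w_n)$. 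The Nagel--Stein--Wainger theory \cite{NSW} applied to the \emph{limit} fields then produces \emph{some} $\bfj$ and $t^0$ with $\det D\Phi_{\bfj}(t^0)\neq 0$. Since $\Phi^{(\ell)}_{\bfj}\to\Phi_{\bfj}$ in $C^2$, this nondegeneracy persists for large $\ell$, giving a uniform lower bound on $|\tilde B_{\bfj}^{(\ell)}(0;c_\eps)| = |[\Phi^{(\ell)}]^{-1}(B_{\bfj}(x^{(\ell)};\delta^{(\ell)}))|$ and contradicting the assumption. In short, the good $\bfj$ is extracted from a compactness limit rather than built from $I_{x_0}$, and the first inequality also drops out (since $B_{\bfj}(x;\delta)\subset B(x;n\delta_1,\ldots,n\delta_k)$ followed by rescaling). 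Your sketch for the first claim — pushing $\Phi(B)$ into a dilated CC ball via BCH expansion into basic flows — is a plausible alternative strategy in principle, but it is not what the paper does and also carries unaddressed error-control burdens; more importantly, it does not help with the combinatorial obstruction in the second claim, which is where the real work lies.
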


The proof of this proposition, which uses the Arzela--Ascoli theorem and the Nagel--Stein--Wainger theory, is based on the proof of a related lemma in \cite{ChRL}.  The proof of the analogous fact in \cite{TW} seems more specialized to the bilinear case, as it uses the fact that there are only two possibilities for $\bfj$, namely $(1,2,1,\ldots)$ and $(2,1,2,\ldots)$.

\begin{proof}  If the statements in the proposition were false, there would exist sequences $\{x^{(\ell)}\}$ of points in $V$ and $\{\delta^{(\ell)}\}$ of $k$-tuples satisfying the hypotheses of the proposition such that
$$
\lim_{\ell \to \infty} \delta^{(\ell)} = (0,\ldots,0)
$$
and if
$$
\Phi^{(\ell)} := \Phi_{x^{(\ell)},K\delta^{(\ell)}},
$$
then
\begin{align} \label{ineq:limit_0}
\lim_{\ell \to \infty}    |[\Phi^{(\ell)}]^{-1}(B_{\bfj}(x;\delta^{(\ell)}))| = 0
\end{align}
for each $\bfj \in \{1,\ldots,k\}^n$.  (Here $K = K_{\eps}$ is sufficiently large to allow the applications of Lemmas~\ref{L:1}--\ref{L:5} below.)

Letting $Y^{(\ell)}_i$ be the pullback of $\delta_i^{(\ell)}X_i$ by
$\Phi^{(\ell)}$, by \eqref{YwCM} and the
Arzela--Ascoli theorem, passing to subsequences if necessary, there exists a vector field $Y_i$ so that 
$Y_i^{(\ell)} \to Y_i$ in $C^M(B)$, with $B$ as in Lemma~\ref{domainPhi} and for $M$ arbitrarily large.  We take $M \gg \eps^{-1}$ so that each sequence $Y_w^{(\ell)}$ with $w \in \bfI$ (defined in \eqref{bfI}) converges in $C^N(B)$, for $N$ large.  By Lemma~\ref{L:1}, for each $\ell$ there exists $(w_1^{(\ell)},\ldots,w_n^{(\ell)}) \in \bfI$ so that 
$$
|\det(Y_{w_1^{(\ell)}}^{(\ell)},\ldots, Y_{w_n^{(\ell)}}^{(\ell)})| \sim 1
$$
in $B$.  By finiteness of $\bfI$, after passing to a subsequence we may assume that there is a single such $n$-tuple, $(w_1,\ldots,w_n)$.  We then have that
$$
|\det(Y_{w_1},\ldots,Y_{w_n})| \sim 1
$$
in $B$.

Now we have a contradiction.  On the one hand, by the work of Nagel, Stein, and Wainger in \cite{NSW}, there exists a sequence $\bfj$ and a constant $0 < c_{\eps} < 1$ so that 
\begin{align} \label{ineq:lb_limit_ball}
    |\tilde{B}_{\bfj}(0,c_{\eps},\ldots,c_{\eps})| \sim 1,
\end{align}
where $\tilde{B}_{\bfj}(0,c_{\eps},\ldots,c_{\eps})$ is the closure of the set
$$
\{e^{t_nc_{\eps}Y_{j_n}}\cdots e^{t_1c_{\eps}Y_{j_1}}(0) : |t_i| \leq 1,\: 1 \leq i \leq n\}.
$$
Thus, if we let 
\[
\Phi_{\bfj}(t) := e^{t_n Y_{j_n}}\cdots e^{t_1 Y_{j_1}}(0),
\]
then for some $t^0$ with $\sum|t^0_i| < c_{\eps}$, $\det D\Phi_{\bfj}(t_0) \neq 0$.

On the other hand, because the vector fields $Y_i^{(\ell)}$ converge to $Y_i$ in $C^M(B)$ for large $M$, the maps $\Phi_{\bfj}^{(\ell)}$ (defined analogously to the map above) converge to $\Phi_{\bfj}$ in (say) $C^2(B)$.  Thus $|\det D\Phi_{\bfj}^{(\ell)}(t_0)| > c > 0$ (eventually), and so the $\Phi_{\bfj}^{(\ell)}$ are injective on a uniform neighborhood of $t_0$.  This gives a lower bound on 
\[
|\tilde{B}_{\bfj}^{(\ell)}(0;c_{\eps})| := \Phi_{\bfj}(\{t \in \reals^n :  |t_i| \leq c_{\eps}, 1 \leq i \leq n\}).
\]
But the above set is just $\Phi^{-1}_{K\delta^{(\ell)}}(B_{\bfj}(x;\delta^{(\ell)}))$, and we have the promised contradiction.
\end{proof}

The two propositions imply the following doubling property.
\begin{cor}
    Whenever $\delta = (\delta_1,\ldots,\delta_k)$ satisfies \eqref{small} and
    \eqref{nondeg}, we
    have that
    \[
        |B(x;2\delta_1,\ldots,2\delta_k)| \leq C_{\eps}
        |B(x;\delta_1,\ldots,\delta_k)|
    \]
    uniformly in $x \in V$.
\end{cor}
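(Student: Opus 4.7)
The plan is to derive the doubling inequality by sandwiching the volumes of Carnot--Carath\'eodory balls between values of $|\Lambda_\delta(x)|$ using the two propositions that immediately precede the corollary, and then to show that $|\Lambda_\delta|$ is itself essentially invariant under the replacement $\delta \mapsto 2\delta$.

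First I would verify that $2\delta = (2\delta_1,\ldots,2\delta_k)$ still satisfies \eqref{small} and \eqref{nondeg}. For \eqref{small}, this amounts to replacing $c_{\eps,K}$ by $c_{\eps,K}/2$, which only tightens the smallness requirement by a factor depending on $\eps$. For \eqref{nondeg}, if $\delta_i \leq C\delta_j^{\eps}$ then $2\delta_i \leq 2C\delta_j^{\eps} = 2^{1-\eps} C (2\delta_j)^{\eps}$, so $2\delta$ obeys the same nondegeneracy condition (with an $\eps$-dependent constant, which is permitted).

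Next I would apply the upper bound proposition to $B(x;2\delta)$:
\[
|B(x;2\delta_1,\ldots,2\delta_k)| \lesssim C_{\eps}|\Lambda_{2\delta}(x)|.
\]
Then I would compare $|\Lambda_{2\delta}(x)|$ and $|\Lambda_{\delta}(x)|$. With $K=1$, the components of $\Lambda_\delta(x)$ are $\delta^{\deg I}\lambda_I(x)$ for $I \in \bfI$, and every $I \in \bfI$ satisfies $(\deg I)_j \leq d/\eps$ for each $j$, hence $|\deg I|_1 := \sum_j (\deg I)_j \leq kd/\eps$. Thus for each $I \in \bfI$,
\[
(2\delta)^{\deg I}|\lambda_I(x)| = 2^{|\deg I|_1}\delta^{\deg I}|\lambda_I(x)| \leq 2^{kd/\eps}\delta^{\deg I}|\lambda_I(x)|,
\]
so $|\Lambda_{2\delta}(x)| \leq 2^{kd/\eps}|\Lambda_\delta(x)|$, with $2^{kd/\eps}$ an $\eps$-dependent constant.

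Finally I would apply the lower bound proposition to $\delta$ itself, obtaining $|\Lambda_\delta(x)| \leq C_\eps |B(x;\delta_1,\ldots,\delta_k)|$. Chaining the three inequalities gives the desired doubling estimate with a constant depending only on $\eps$ and the vector fields $X_1,\ldots,X_k$. There is no genuine obstacle here; the only point that requires a moment of care is checking that $2\delta$ still lies in the regime where the two propositions apply, which is the verification carried out in the first step. The key conceptual input is the finiteness of $\bfI$ together with the uniform degree bound $(\deg I)_j \leq d/\eps$ baked into its definition, which is precisely what allows the factor $|\Lambda_{2\delta}|/|\Lambda_\delta|$ to be controlled by an $\eps$-dependent constant rather than by anything depending on $\delta$ itself.
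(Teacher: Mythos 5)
Your proof is correct and is precisely the argument the paper leaves implicit when it says ``The two propositions imply the following doubling property'': apply the upper-bound proposition at $2\delta$, the lower-bound proposition at $\delta$, and observe that $|\Lambda_{2\delta}(x)|$ and $|\Lambda_{\delta}(x)|$ differ only by the $\eps$-dependent factor $2^{kd/\eps}$ thanks to the uniform degree bound in the definition of $\bfI$. The preliminary check that $2\delta$ remains in the admissible regime for both propositions is the right point to flag, and your verification of \eqref{nondeg} (absorbing the $2^{1-\eps}$ into the implicit constant) is exactly what is needed.
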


We also obtain the following alternative characterization of the
Newton polytope $P$.

\begin{prop}
    The Newton polytope $P$ associated to the vector fields
    $X_1,\ldots,X_k$ is equal to the set of all points $b =
    (b_1,\ldots,b_k)$ so that
    \begin{align} \label{Bdeltab}
        |B(0;\delta_1,\ldots,\delta_k)| \gtrsim \delta^b,
    \end{align}
    where the implicit constant depends on $\eps$, but is uniform in
    $\delta_1,\ldots,\delta_k > 0$ satisfying \eqref{small} and \eqref{nondeg}.
\end{prop}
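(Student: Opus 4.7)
The plan is to combine the two preceding propositions, which together with the definition of $\Lambda_\delta$ (here with $K=1$) give the comparison
\[
|B(0;\delta)| \sim |\Lambda_\delta(0)| \sim \max_{I \in \bfI} \delta^{\deg I}|\lambda_I(0)|
\]
uniformly for $\delta$ satisfying \eqref{small} and \eqref{nondeg}. Let $Q$ denote the set of $b \in \reals^k$ for which $|B(0;\delta)| \gtrsim \delta^b$ uniformly over the same range of $\delta$; the goal is to establish $P \subset Q$ and $Q \subset P$.

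For $P \subset Q$: By Dickson's lemma, the componentwise-minimal elements of $\{\deg I : I \in W^n,\ \lambda_I(0) \neq 0\}$ form a finite set $V_0 = \{v_1,\ldots,v_L\}$, and since the generating set for $P$ is upward closed, $P$ has the Minkowski decomposition $P = \mathrm{conv}(V_0) + [0,\infty)^k$. Hence any $b \in P$ can be written as $b = \sum_\ell \theta_\ell v_\ell + c$ with $\theta_\ell \geq 0$, $\sum_\ell \theta_\ell = 1$, and $c \in [0,\infty)^k$, where each $v_\ell = \deg I_\ell$ for some $I_\ell$ with $\lambda_{I_\ell}(0) \neq 0$. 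Using $\delta_j \lesssim 1$ to conclude $\delta^c \lesssim 1$, together with $\delta^{\deg I_\ell} \lesssim |B(0;\delta)|$ (immediate if $I_\ell \in \bfI$, and otherwise via the bound $\delta^{\deg I_\ell} \lesssim \delta^{\deg I_0}$ recorded in Section~\ref{CCI} combined with $|\lambda_{I_0}(0)| \sim 1$), the identity $\delta^b = \delta^c \prod_\ell (\delta^{v_\ell})^{\theta_\ell}$ yields $\delta^b \lesssim |B(0;\delta)|$.

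For $Q \subset P$: Suppose $b \notin P$. Exactly as in the proof of Theorem~\ref{nec}, the finiteness of the vertex set of $P$ allows any separating hyperplane to be perturbed into one with strictly positive normal, producing $a \in (0,\infty)^k$ and $\eps > 0$ with $a \cdot b < 1 - \eps$ and $a \cdot b' > 1 + \eps$ for every $b' \in P$. Choose $\delta_j = \delta_0^{a_j}$ for small $\delta_0 > 0$; strict positivity of the $a_j$ ensures \eqref{nondeg} with a uniform exponent. Then $\delta^b = \delta_0^{a \cdot b} \geq \delta_0^{1 - \eps}$, while for every $I \in \bfI$ with $\lambda_I(0) \neq 0$ we have $\deg I \in P$ and therefore $\delta^{\deg I}|\lambda_I(0)| \lesssim \delta_0^{1+\eps}$. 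Consequently $|B(0;\delta)|/\delta^b \lesssim \delta_0^{2\eps} \to 0$ as $\delta_0 \to 0$, contradicting $b \in Q$.

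Neither direction involves a substantial new obstacle: the bookkeeping in the first direction rests on Dickson's lemma and the Minkowski decomposition of $P$, and the positive-normal perturbation in the second mirrors the corresponding step in Theorem~\ref{nec}. The main technical care is to ensure that the $\delta$ tested in the second direction satisfies \eqref{nondeg}, which follows automatically from strict positivity of $a$.
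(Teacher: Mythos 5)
Your proof is correct and follows essentially the same route as the paper: both directions rest on the volume comparison $|B(0;\delta)| \sim |\Lambda_{\delta}(0)|$ from the two preceding propositions, the inclusion $P \subset Q$ via a convex decomposition of $b$ over the finite generating set (the paper uses the vertex set $\mathbf{I}_0$ and the bound $\delta^b \leq \sum_{I \in \mathbf{I}_0}\delta^{\deg I}$, you use Dickson's lemma and the Minkowski decomposition $P = \mathrm{conv}(V_0)+[0,\infty)^k$, which amounts to the same thing), and the inclusion $Q \subset P$ by the hyperplane-separation-plus-scaling argument already carried out in the proof of Theorem~\ref{nec}. Your write-up is somewhat more explicit than the paper's, which simply cites the proof of Theorem~\ref{nec} for the second direction, but the substance is identical.
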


\begin{proof}  If $b \notin P$, then by the propositions of this
section and the proof of Theorem~\ref{nec}, for some $\eps > 0$ no
uniform bound
\[
    |B(0;\delta_1,\ldots,\delta_k)| \gtrsim \prod_{j=1}^k
    \delta_j^{b_j}
\]
can hold.  If $b \in P$ and $\delta_1,\ldots,\delta_k$ are any
positive numbers, then
\[
    \delta^b \leq \sum_{I \in {\bfI}_0}\delta^I \sim
    |\Lambda_{\delta}(0)|,
\]
where ${\bfI}_0$ is the set of vertices of $P$.  By the propositions
of this section, we thus have \eqref{Bdeltab}. 
\end{proof}

By the doubling property, we have 
\[
    |B(x;\delta_1,\ldots,\delta_i,\ldots,\delta_k)| \sim
    |B(x;\delta_1,\ldots,2\delta_i,\ldots,\delta_k)|.
\]
Since $e^{t\delta_iX_i}B(x;\delta) \subset B(x;\delta_1,\ldots,2\delta_i,\ldots,\delta_k)$, while $\pi_i \circ e^{tX_j} \equiv \pi_i$, by the coarea formula we must have
\[
    \frac{|B(x;\delta_1,\ldots,\delta_k)|}{|\pi_i(
    B(x;\delta_1,\ldots,\delta_k))|} \gtrsim_{\eps} \delta_i.
\]
From this and the propositions, we are able to obtain geometric
versions of Theorems~\ref{nec} and~\ref{suff}.

Theorem~\ref{nec} is thus equivalent to the following tautology: If $S$
is of restricted weak-type $(p_1,\ldots,p_k)$, then \eqref{wkineq3} (with the implicit constant depending on $\eps$) 
holds whenever $\delta$ satisfies \eqref{small} and \eqref{nondeg} and $\Omega = B(0;\delta_1,\ldots,\delta_k)$.

We now give an alternative, more geometric, statement of Theorem~\ref{suff}, analogous to the formulation in \cite{ChRL}.

\begin{theorem} \label{thm:geometric}
    Assume that $p = (p_1,\ldots,p_k)$ satisfies \eqref{p>1}.
    Suppose that for each $\eps>0$ there exists $c_{\eps}>1$ so that
    whenever $\delta = (\delta_1,\ldots,\delta_k)$ is a $k$-tuple of
    sufficiently small (depending on $\eps$) positive numbers
    satisfying \eqref{nondeg}, we have
    \[
        |B(0;\delta)| \leq c_{\eps} \prod_{j=1}^k
        |\pi_j(B(0;\delta))|^{1/p_j}.
    \]
    Then whenever $\tilde{p}>p$ (i.e. $\tilde{p}_i > p_i$, $1 \leq i
    \leq k$), we have \eqref{stineq}.
\end{theorem}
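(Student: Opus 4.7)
The plan is to prove the restricted weak-type inequality \eqref{wkineq3} at the exponent $p$ whenever the parameters $\alpha_j = |\Omega|/|\pi_j(\Omega)|$ satisfy the nondegeneracy \eqref{nondeg}, then to extend to arbitrary $\Omega$ and upgrade to strong-type at $\tilde p > p$ by dyadic decomposition in the $\alpha_j$ together with real interpolation. Fix $\tilde p > p$. After shrinking $V$ and using smoothness of the $X_j$'s, the geometric hypothesis may be assumed uniform at every $x \in V$, not only at the origin.

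The core of the argument is a method-of-refinements step, adapted from \cite{TW} and \cite{ChRL} to the multilinear setting. For a Borel $\Omega \subset \supp(a)$ with $\alpha = (\alpha_1,\ldots,\alpha_k)$ satisfying \eqref{nondeg}, I would iterate a pigeonhole/quasi-extremal selection along the flows $e^{s\alpha_j X_j}$ to produce a base point $x_0 \in \Omega$, a word tuple $I_{x_0} \in \bfI$ as in \eqref{Ix0}, and a set of measure $\sim 1$ inside $B$ whose image under $\Phi = \Phi_{x_0,K\alpha}$ (see \eqref{defPhi}) lies in $\Omega$. Lemma~\ref{L:5} together with the propositions of Section~\ref{CCII} then give
\[
    |\Omega| \gtrsim K^{-n}|\Lambda_\alpha(x_0)| \sim |B(x_0;\alpha)|.
\]
Inserting the hypothesis at $x_0$ and the Jacobian-type bound $|\pi_j(B(x_0;\alpha))| \lesssim |B(x_0;\alpha)|/\alpha_j$ derived in Section~\ref{CCII}, a short computation using $\sum p_j^{-1}>1$ produces $|B(x_0;\alpha)| \gtrsim_\eps \alpha^{b(p)}$, and consequently $|\Omega| \gtrsim_\eps \alpha^{b(p)}$. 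This is \eqref{wkineq3} at the exponent $p$, valid whenever $\alpha$ satisfies \eqref{nondeg}.

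To handle $\Omega$ whose $\alpha$ fails \eqref{nondeg} and to pass to strong-type at $\tilde p$, I would decompose dyadically according to the scales of the $\alpha_j$'s, estimating dyadic blocks in the degenerate regime by the trivial H\"older-type bounds of Section 2 on the coordinates that misbehave. The margin between $\tilde p$ and $p$ provides room to absorb the $\eps$-loss per truncation into a convergent geometric series in the dyadic indices, delivering restricted weak-type at exponents $q$ sweeping a neighborhood of $p$ inside $[p,\tilde p]$. A multilinear Marcinkiewicz-type interpolation (in the spirit of \cite{TW}, \cite{ChRL}, and \cite{CCC}) then upgrades the conclusion to strong-type at $\tilde p$.

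The main obstacle is the refinement step in the middle paragraph. Producing $x_0 \in \Omega$, selecting $I_{x_0}$, and ensuring that $\Phi_{x_0,K\alpha}$ is quantitatively injective on a subset of $B$ whose image lies in $\Omega$ requires an iterated pigeonhole argument along the flows of the $X_j$ together with careful use of the uniform Jacobian control in Lemmas~\ref{L:1}--\ref{L:5}. This is already the technical heart of the bilinear arguments of \cite{TW} and \cite{ChRL}; the extension to general $k$ forces overlapping choices of word tuples and corresponding combinatorial bookkeeping, which is where the new multilinear details should enter.
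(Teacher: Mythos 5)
Your high-level strategy matches the paper: the bound in Theorem~\ref{thm:geometric} is established by proving the restricted weak-type estimate \eqref{wkineq4} (essentially \eqref{wkineq3} with an $\alpha_k^{C\eps}$ loss) via the method of refinements, then using the room afforded by $\tilde{p}>p$ and real interpolation. Equivalently, the geometric hypothesis combined with $|\pi_j(B(0;\delta))| \lesssim_\eps |B(0;\delta)|/\delta_j$ (and then the propositions of Section~\ref{CCII}) shows $b(p)\in P$, reducing the statement to the form proved in Section~9. So the architecture is right. However, there are a few points that need to be taken more seriously.

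First, the refinement step you defer as the ``main obstacle'' is precisely the content of Section~9 and cannot be left as a black box; it requires the chain of refinements $\Omega_0 \subset \cdots \subset \Omega_{nk^n}\subset \tilde\Omega$ using central sets (Lemma~\ref{TW8.2}), a careful choice of $\bfj$ maximizing $|B_{\bfj}(x_0;\delta)|$, the polynomial-on-central-set lemmas (Lemma~\ref{polyTn}), a Taylor approximation $\Psi_{\bfj}$ with the quantitative bound \eqref{bound_diff}, and the topological-degree/Bezout argument of Lemma~\ref{lemma_7_1}. None of this is routine in the multilinear case (e.g.\ the choice of the word-tuple $\bfj\in\{1,\ldots,k\}^n$, which in the bilinear case has only two candidates, requires the Arzel\`a--Ascoli/Nagel--Stein--Wainger argument).

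Second, your claim of producing a set of measure $\sim 1$ in $B$ whose $\Phi$-image lies in $\Omega$, and hence $|\Omega|\gtrsim |B(x_0;\alpha)|$ with no loss, is too strong: the quasi-extremal refinements inherently lose factors of $\alpha_k^{C\eps}$ at each stage. The correct conclusion is the weaker \eqref{wkineq4}, $|\Omega|\gtrsim\alpha_k^{C\eps}\alpha^{b}$, and it is exactly this loss that is absorbed by the assumption that $b(p)$ can be pushed into the interior of $P$ (i.e.\ by $\tilde p>p$). You gesture at this absorption but it should be made consistent with the preceding claim.

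Third, the dyadic decomposition to treat $\alpha$ violating \eqref{nondeg} is an unnecessary detour. The paper sidesteps the issue entirely by defining $\delta_j := C_\eps^{-1}\alpha_k^{C\eps}\alpha_j$; since $\alpha_1\geq\cdots\geq\alpha_k$ and $\alpha_j\lesssim 1$, these $\delta_j$ automatically satisfy \eqref{nondeg} (with a possibly smaller $\eps'$), at the cost of the same $\alpha_k^{C\eps}$ that is already being lost. This is cleaner and avoids having to sum a geometric series or appeal separately to H\"older bounds on degenerate blocks.
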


\section{Proof of Theorem~\ref{suff}}

In this section we will prove that if $V$ is a sufficiently small
neighborhood of 0 and $a \in C^{\infty}_c(V)$, then $S$ is of
restricted weak-type $(p_1,\ldots,p_k)$ whenever $p$ satisfies
\eqref{p>1} and $b(p)$ lies in the interior of the Newton polytope
$P$.  One may use the arguments in this section together with a
partition of unity to see the following:  If $V$ is bounded and for
every vertex $b$ of $P$ and $x \in V$ there exists $I \in W^n$ with
$\deg(I) \leq b$ and $\lambda_I(x) \neq 0$, then $V$ is sufficiently
small in the above sense.  By real interpolation, this proves
Theorem~\ref{suff}.

We note that the arguments of this section are largely based on those in \cite{ChRL} and \cite{TW}, but some new details, such as in the refinement, are needed in the multilinear setting.

We let ${\bfI}_0 \subset W^n$ be the (finite) set of all $n$-tuples
of words $I$ such that $\lambda_I(0) \neq 0$ and such that $\deg(I)$
is a vertex of $P$.  By passing to a smaller subset of $V$ if
needed, we may assume that if $I \in {\bfI}_0$, then $\lambda_I \sim
1$ on $V$.

Let $\Omega \subset V$ be a Borel set having positive Lebesgue
measure, and let $\alpha_1,\ldots,\alpha_k$ be defined as in
\eqref{defalpha}.  By symmetry, we may assume that
\[
    \alpha_1 \geq \ldots \geq \alpha_k.
\]
Since $\alpha_j \lesssim \diam(V)$, by passing to a smaller subset
of $V$ if needed, we may assume that each $\alpha_j$ is as small as
we like.

In order to prove that
\begin{align} \tag{\ref{wkineq3}}
    |\Omega| \gtrsim \alpha_1^{b_1} \cdots \alpha_k^{b_k}
\end{align}
for $b$ lying in the interior of $P$, it suffices to show that if $b
\in P$, there exists a constant $C>0$ so that for every $\eps>0$ we
have
\begin{align} \label{wkineq4}
    |\Omega| \gtrsim \alpha_k^{C\eps}\alpha^b,
\end{align}
where in the preceding statement and for the remainder of this
section the implicit constant is allowed to depend on $\eps$.  To
see that this suffices, note that if $b \in \rm{int}\,P$, then there exists
$b' \in \rm{int}\,P$ such that $b' < b$.  Then $b > b'+(0,\ldots,0,C\eps)$ if
$\eps$ is sufficiently small, so \eqref{wkineq4} with $b = b'$
implies \eqref{wkineq3}, by smallness of the $\alpha_j$.

We will assume throughout that $\eps$ is small enough that ${\bfI}_0
\subset \bfI$, where $\bfI$ is the set defined in \eqref{bfI}.

\subsection{Refining $\Omega$}  To apply what we learned in previous sections, we must put ourselves in the situation of considering a ``large'' subset of $\Omega$ which ``looks like'' a Carnot Carath\'eodory ball with weakly comparable radii.  In this subsection, we will make an initial refinement of $\Omega$ which will give us the weakly-comparable ``radii''.

By boundedness of $V$, we may decompose
$\Omega$ as the disjoint union of $\lesssim \alpha_k^{-C\eps}$ Borel
sets of diameter $\lesssim \alpha_k^{\eps}$.  Henceforth, we will
work with the largest of these, denoted $\tilde{\Omega}$, which has
measure
\[
    |\tilde{\Omega}| \gtrsim \alpha_k^{C\eps}|\Omega|.
\]

Next, we refine $\tilde{\Omega}$.  Let $\bfJ \in
\{1,\ldots,k\}^{nk^n}$ be a sequence which is formed by
concatenating all of the elements of $\{1,\ldots,k\}^n$ in some
order.  Our next goal is to construct a sequence of refinements
\[
    \Omega_0 \subset \Omega_1 \subset \cdots \subset \Omega_{nk^n}
    \subset \tilde{\Omega}
\]
of $\tilde{\Omega}$ so that $\Omega_0 \neq \emptyset$ and so that
for $1 \leq i \leq nk^n$ and $x \in \Omega_{i-1}$, the set
\[
    \{t :|t| \ll 1 \, \text{ and } \,
    e^{tX_{J_i}}(x) \in \Omega_{i}\}
\]
has a particular form.

The following definition is due to Tao and Wright.

\begin{defn} If $0 < \eps,w \ll 1$, then a {\it central set of width
$w$} is a subset $S$ of $[-w,w]$ having positive measure and such
that for any interval $I$
\[
    |I \cap S| \lesssim \left(\tfrac{|I|}{w}\right)^{\eps}|S|.
\]
\end{defn}
There is an analogous definition due to Christ in \cite{ChRL}.

\begin{lemma} \label{TW8.2}
    Let $\Omega' \subset \tilde{\Omega}$ with $|\Omega'| \gtrsim
    \alpha_k^{C\eps}|\tilde{\Omega}|$.  Then if $1 \leq j \leq k$,
    there exists a subset $\langle \Omega' \rangle_j \subset
    \Omega'$ so that $|\langle \Omega' \rangle_j| \gtrsim
    \alpha_k^{C'\eps}|\tilde{\Omega}|$ and so that for each $x \in
    \langle \Omega' \rangle_j$,
    \[
        \{t:|t| \ll 1 \, \text{ and } \,
        e^{tX_j}(x) \in \Omega'\}
    \]
    is a central set of width $w$,
    \[
        \alpha_k^{C'\eps} \alpha_j \lesssim w \lesssim
        \alpha_k^{\eps}
    \]
    and measure $\gtrsim \alpha_k^{C'\eps}\alpha_j$.  Here $C'$ is a
    constant which is larger than $C$, but independent of $\eps$.
\end{lemma}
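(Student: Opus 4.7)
The plan is to follow the coarea / pigeonhole / central-refinement template standard in this circle of ideas. First, straighten the vector field $X_j$ in flow-box coordinates $(y,t)$ on a neighborhood of $V$, so that $X_j=\partial_t$ and the Jacobian of the change of variables is $\sim 1$. Writing $S_y:=\{t\in\reals: e^{tX_j}(y)\in\Omega'\}$ and $f(y):=|S_y|$, the area formula gives $|\Omega'|\sim \int f(y)\,dy$, with $y$ ranging over a transversal parametrizing fibers of $\pi_j$. Since $\Omega'\subset\tilde{\Omega}$ has diameter $\lesssim\alpha_k^{\eps}$, the pointwise bound $f(y)\lesssim\alpha_k^{\eps}$ holds; combined with the hypothesis $|\Omega'|\gtrsim\alpha_k^{C\eps}|\tilde{\Omega}|$, the identity $|\Omega|=\alpha_j|\pi_j(\Omega)|$, and $|\tilde{\Omega}|\gtrsim\alpha_k^{C\eps}|\Omega|$, this yields $\int f(y)\,dy\gtrsim \alpha_k^{2C\eps}\alpha_j|\pi_j(\Omega)|$.

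A dyadic pigeonhole on $f(y)$, with logarithmic loss absorbed into $\alpha_k^{O(\eps)}$, extracts a scale $\mu\in[\alpha_k^{C'\eps}\alpha_j,\alpha_k^{\eps}]$ and a set $G$ of basepoints on which $f(y)\sim\mu$ and $|G|\mu\gtrsim \alpha_k^{C'\eps}|\Omega'|$. For each $y\in G$ I then iteratively refine $S_y$ toward centrality. Initialize with $w\sim\alpha_k^{\eps}$, so that a suitable translate of $S_y$ lies in $[-w,w]$; if the central-set bound $|S_y\cap I|\lesssim(|I|/w)^{\eps}|S_y|$ fails for some dyadic subinterval, choose such an $I$, replace $S_y$ by $S_y\cap I$ and $w$ by $|I|$, and iterate. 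The procedure halves $w$ at each step and must halt once $w\lesssim|S_y|$, hence in $O(\log\alpha_k^{-1})$ steps; the telescoping product of loss factors $(|I|/w)^{\eps}$ over the iterations is bounded by $\alpha_k^{O(\eps)}$. A further pigeonhole in the final widths $w_y$ produces a common value $w\in[\alpha_k^{C'\eps}\alpha_j,\alpha_k^{\eps}]$, a subset $G'\subset G$, and for each $y\in G'$ a subset $S_y^{\ast}\subset S_y$ which is central of width $w$ with $|S_y^{\ast}|\gtrsim \alpha_k^{C'\eps}\mu$.

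I then set
\[
\langle \Omega' \rangle_j := \{e^{tX_j}(y) : y\in G',\ t\in S_y^{\ast}\}\subset\Omega',
\]
whose measure is bounded below by $\int_{G'}|S_y^{\ast}|\,dy\gtrsim \alpha_k^{C'\eps}|\tilde{\Omega}|$ via the area formula with accumulated losses. For $x=e^{tX_j}(y)\in \langle \Omega' \rangle_j$, the set $\{s:e^{sX_j}(x)\in\Omega'\}$ is a translate of $S_y$ and inherits the centrality and width bounds from $S_y^{\ast}$. \emph{The main obstacle}, and the place the plan requires the most care, is the central-refinement step: one must arrange simultaneously that (i) the iteration terminates with total loss $\alpha_k^{O(\eps)}$, and (ii) once the starting points $x$ are restricted to the dyadic interval produced at the final iteration, the relevant fiber-intersection through $x$ truly lies in $[-w,w]$ and satisfies the $(|J|/w)^{\eps}$ bound. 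Both points can be arranged by interleaving the dyadic pigeonhole with a restriction of admissible basepoints, following the analogous bilinear argument in \cite{TW} and its streamlined variant in \cite{ChRL}; the remaining multilinear bookkeeping is routine, as the loss per step depends only on $\eps$ and not on $k$.
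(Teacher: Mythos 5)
The paper gives no proof of this lemma; it simply cites Lemma~8.2 of Tao--Wright and remarks that the proof carries over to the multilinear setting. Your sketch reconstructs that argument's structure (flow-box coordinates for $X_j$, the coarea identity $|\Omega'| \sim \int f(y)\,dy$, a dyadic pigeonhole on the fiber measures, a centrality refinement of each slice $S_y$, and a final pigeonhole to make the width $w$ uniform), and the $\alpha_k^{O(\eps)}$ accounting for the $O(\log(1/\alpha_k))$-term pigeonholes is correct. So the route is the right one.

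The step that is too quick is the assertion that for $x = e^{tX_j}(y) \in \langle\Omega'\rangle_j$ the set $\{s : e^{sX_j}(x)\in\Omega'\} = S_y - t$ ``inherits the centrality and width bounds from $S_y^{\ast}$.'' That inference is not automatic: $S_y - t$ strictly contains $S_y^{\ast} - t$ in general, and the extra mass can sit outside $[-w,w]$ (and can spoil the $(|I|/w)^{\eps}$ density bound) whenever $w \ll \alpha_k^{\eps}$. Since the lemma asserts centrality of the fiber of the \emph{ambient} set $\Omega'$ through points of the \emph{refined} set---not centrality of the fiber of the refined set---this has to be arranged, not asserted. You do correctly single this out as the crux in your final paragraph (your point (ii)), but the fix should be spelled out rather than deferred: run the centrality refinement by passing to a minimal interval $I$ on which the bound fails (so that $S_y \cap I$ is central inside $I$ with constant $1$, by minimality), confine base points $t_0$ to the middle portion of $I$, and observe that $(S_y - t_0)\cap[-w,w]$ is then a slice of $S_y \cap I$ for $w \sim |I|$. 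That is exactly the Tao--Wright device, and once it is written out your sketch is the argument the paper is invoking.
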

The proof is the same as that of Lemma~8.2 in \cite{TW} and will be omitted.  

We now define the refinements as follows:
\[
    \Omega_{nk^n}:=\langle \tilde{\Omega} \rangle_{J_{nk^n}},
\]
given $\Omega_i$, $2 \leq i \leq nk^n$, 
\[
    \Omega_{i-1} := \langle \Omega_i \rangle_{J_{i-1}},
\]
and $\Omega_0:=\Omega_1$.  Then $|\Omega_0| \gtrsim
\alpha_k^{C\eps}|\Omega| > 0$, so $\Omega_0 \neq \emptyset$.
Moreover, since $\Omega_i$ satisfies the conclusion of Lemma~\ref{TW8.2} with $j = J_i$, and since $\Omega_{i-1} \subset
\Omega_i$, whenever $x \in \Omega_{i-1}$,
\[
    \{t:|t| \ll 1 \, \text{ and } \,
    e^{tX_{J_i}}(x) \in \Omega_i\}
\]
is a central set of width $w_i$, $C_{\eps}^{-1}\alpha_k^{C\eps}
\alpha_{J_i} \leq w_i \lesssim \alpha_k^{\eps}$ and measure $\geq
C_{\eps}^{-1} \alpha_k^{C\eps} \alpha_{J_i}$, where $C_{\eps}$ is a
constant depending on $\eps$.

\subsection {Filling out $\tilde{\Omega}$}  In this subsection, we show that $\tilde{\Omega}$ contains a set which looks like a Carnot--Carath\'eodory ball with radii coming from the measures of the central sets in the previous subsection.  We also sketch a heuristic argument for the conclusion of the proof.

Fix a base point $x_0 \in \Omega_0$ and set
\[
    \delta_j:= C_{\eps}^{-1}\alpha_k^{C\eps} \alpha_j, \hspace{.5cm}
    1 \leq j \leq k.
\]
Then the $\delta_j$ satisfy \eqref{nondeg}, though possibly with a
smaller value of $\eps$.  Hence we may choose an $n$-tuple $\bfj \in
\{1,\ldots,k\}^n$ so that
\[
    |B_{\bfj}(x_0;\delta_1,\ldots,\delta_k)| \sim
    |B(x_0;\delta_1,\ldots,\delta_k)|.
\]
For $1 \leq i \leq n$, define on a ball $B_i$ centered at 0 in
$\reals^i$ of radius $\sim 1$
\[
    \Phi_{\bfj}^i(t_1,\ldots,t_i):= e^{t_iX_{j_i}}\cdots
    e^{t_1X_{j_1}}(x_0).
\]

Let $\ell$ be such that $(J_{\ell n+1},\ldots,J_{\ell n+n}) = (j_1,\ldots,j_n)$.  Define
\[
    T_1 = \{t_1 \in \reals:|t_1| \ll 1 \, \text{ and }\, \Phi_{\bfj}^1(t_1) \in \Omega_{\ell n+1}\}.
\]
Then since $x_0 \in \Omega_{\ell n+1}$, $T_1$ is a central set of width
$w_1$ (after reindexing), with
\[
    \alpha_k^{C\eps}\alpha_{j_1} \lesssim w_1 \lesssim
    \alpha_k^{\eps}
\]
and measure $\gtrsim \alpha_k^{C\eps}\alpha_{j_1}$.  Assuming
$T_{i-1}$ has been defined, and $2 \leq i \leq n$, we define
\[
    \tau_i(t):= \{t_i \in \reals:|t_i| \ll 1 \,\text{ and }\,
     \Phi_{\bfj}^i(t,t_i) \in \Omega_{\ell n+i}\}
\]
whenever $t \in T_{i-1}$, and let
\[
    T_i:= \{(t,t_i) \in \reals^i:t \in T_{i-1} \,
    \text{ and }\,   t_i \in \tau_i(t)\}.
\]
Then each of the $\tau_i(t)$ is a central set of width $w_i$,
\[
    \alpha_k^{C\eps} \alpha_{j_i} \lesssim w_i \lesssim
    \alpha_k^{\eps}
\]
and measure $\gtrsim \alpha_k^{C\eps} \alpha_{j_i}$.

Since $\Phi_{\bfj}^n(T_n) \subset \tilde{\Omega}$, it suffices to
prove a lower bound for $|\Phi_{\bfj}^n(T_n)|$.  In fact, since
\begin{align*}
    |B_{\bfj}(x_0;\delta_1,\ldots,\delta_k)| \sim     |B(x_0;\delta_1,\ldots,\delta_k)| \gtrsim \sum_{I \in {\bfI}_0} \delta^{\deg(I)} \gtrsim \alpha_k^{C\eps}\alpha^b,
\end{align*}
it suffices to show that
\[
    |\Phi_{\bfj}^n(T_n)| \gtrsim
    \alpha_k^{C\eps}|B_{\bfj}(x_0;\delta_1,\ldots,\delta_k)|.
\]

The rest of this section will be devoted to making the following
heuristic argument rigorous:

Let
\[
    \tilde{T_n} = \{(t_1,\ldots,t_n) \in \reals^n:|t_i| \leq
    \delta_{j_i}\}.
\]
Then
\begin{align*}
    |B_{\bfj}(x_0;\delta_1,\ldots,\delta_k)| &= |\Phi_{\bfj}^n(\tilde{T_n})| \sim \int_{\tilde{T_n}}|\det \partial_t \Phi_{\bfj}^n(t)|dt \\  &\lesssim \tfrac{|\tilde{T_n}|}{|T_n|} \int_{T_n} |\det \partial_t
    \Phi_{\bfj}^n(t)|dt 
     \lesssim  \alpha_k^{-C\eps}|\Phi_{\bfj}^n(T_n)|.
\end{align*}
In the lines above, we certainly ignored some details, but despite this, the properties of
central sets, together with the smoothness of the $X_j$ make this
heuristic surprisingly close to the truth.

\subsection{Polynomials on $T_n$}

This subsection closely follows the work of Christ in \cite{ChRL}.

\begin{lemma}
    If $S$ is a central set of width $w \lesssim 1$, and $P$ is a
    polynomial of degree $D$ on $\reals$, then
    \[
        |P| \gtrsim C_D \|P\|_{L^{\infty}([-w,w])}
    \]
    on a subset $S' \subset S$ of measure $\gtrsim |S|$.
\end{lemma}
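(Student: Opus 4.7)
The plan is to normalize so that $M := \|P\|_{L^\infty([-w,w])} = 1$, to choose a threshold $\eta = c_D \in (0,1)$ depending only on $D$, and then to take
\[
S' := S \setminus E_\eta, \quad \text{where} \quad E_\eta := \{x \in [-w,w] : |P(x)| \leq \eta\}.
\]
By construction $|P| > \eta$ on $S'$, so it will suffice to verify that $|E_\eta \cap S| \leq \tfrac12 |S|$ for the appropriate choice of $\eta$.

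Two classical facts about polynomials drive the estimate. First, $E_\eta$ is the nonnegativity set of $\eta^2 - P(x)^2$, a polynomial of degree at most $2D$, and hence is a union of at most $D+1$ closed intervals $I_1,\ldots,I_m$. Second, by the Remez inequality applied to $P$ on $[-w,w]$ (with sup norm $1$), the total length of the sublevel set is controlled by
\[
|E_\eta| \leq C_D\, w\, \eta^{1/D},
\]
so in particular each interval has length at most $C_D w \eta^{1/D}$.

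Combining these two ingredients with the defining property of a central set gives
\[
|E_\eta \cap S| \;\leq\; \sum_{j=1}^m |I_j \cap S| \;\lesssim\; \sum_{j=1}^m \Bigl(\frac{|I_j|}{w}\Bigr)^{\!\eps} |S| \;\leq\; (D+1)\bigl(C_D\, \eta^{1/D}\bigr)^{\eps} |S|.
\]
Choosing $\eta = c_D$ small enough, depending only on $D$ (and on the implicit constants, which the paper already permits to depend on the fixed central-set parameter $\eps$), forces the right-hand side to be at most $|S|/2$. Thus $|S'| \geq |S|/2 \gtrsim |S|$ and $|P(x)| \geq c_D = c_D M$ on $S'$, as required.

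The only substantive input is the Remez inequality; the rest is a direct application of the central-set hypothesis. The main point to watch is that the constant $c_D$ depends implicitly on the central-set parameter $\eps$ (through both the Remez exponent $\eta^{\eps/D}$ and the implicit constant in the central-set estimate), but this is harmless since all constants in this section are allowed to depend on $\eps$. The degenerate case $P \equiv 0$ makes the statement vacuous and can be discarded.
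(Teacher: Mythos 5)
Your proof is correct, but it routes through a different classical input than the paper does. The paper (following \cite{ChRL}) argues directly via the factorization of $P$: the sublevel set $\{|P| \ll \|P\|_\infty\}$ is concentrated near the at most $D$ complex zeros of $P$, hence lies in a union of $\leq D$ intervals each of length $\ll D^{-1}w$, and centrality of $S$ handles the rest. You instead invoke the Remez inequality to get the measure bound $|\{|P|\leq\eta\}\cap[-w,w]| \leq C_D\,w\,\eta^{1/D}$, pair it with the elementary observation that this sublevel set is a union of at most $D+1$ intervals (as the nonnegativity set of the degree-$\leq 2D$ polynomial $\eta^2 - P^2$), and then feed each interval into the centrality estimate. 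Both proofs use centrality identically; the difference is purely in which polynomial fact supplies the smallness of the sublevel set. Remez is a somewhat heavier hammer but yields a cleaner, explicitly quantitative dependence ($\eta^{1/D}$) without needing to locate or count complex zeros; the paper's approach is more self-contained if one does not want to cite Remez. Your bookkeeping is sound: you correctly note that each $|I_j|$ is $\leq$ the total measure, so each ratio $|I_j|/w \leq C_D\eta^{1/D}$ is small once $\eta$ is chosen small depending on $D$ and $\eps$, and you correctly observe that the implicit constant from the centrality definition depends on $\eps$, which the paper allows. One could add explicitly that one should first dispose of the case $P\equiv 0$, which you do.
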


\begin{proof}[Sketch of proof.]  This is proved in \cite{ChRL}.  Roughly, the
values of $x \in [-w,w]$ such that $|P(x)|
\ll\|P\|_{L^{\infty}([-w,w])}$ lie near the $\leq D$ complex zeros
of $P$ a distance $\lesssim w$ from $[-w,w]$.  Thus off the union of
$\leq D$ intervals $I_i$ of length $\ll D^{-1}w$, $|P| \gtrsim C_D
\|P\|_{L^{\infty}([-w,w])}$.  The intersection of $S$ with this
union is small by centrality, so we may take $S' = S\backslash
\bigcup_i I_i$. 
\end{proof}

In particular, if we take $S = [-w,w]$, we see that
\[
    \|P\|_{L^{\infty}([-w,w])} \sim w^{-1} \int_{[-w,w]}|P|.
\]

\begin{lemma} \label{polyTn}
    If $T_i \subset \reals^i$, $1 \leq i \leq n$, if $T_{i+1}
    \subset T_i \times [-1,1]$, $1 \leq i \leq n-1$, and if the sets
    \[
        \tau_1:=T_1 \qquad
        \tau_i(t):=\{s \in [-C,C]:(t,s) \in T_i\}
    \]
    are central sets of width $w_1$ and $w_i$ for each $t \in
    T_{i-1}$, respectively, and if $P$ is a polynomial of degree $D$
    on $\reals^n$, then
    \[
        |P| \gtrsim C_D \|P\|_{L^{\infty}(\prod_{i=1}^n [-w_i,w_i])}
    \]
    on a subset $T_n' \subset T_n$ of measure $\gtrsim |T_n|$.
\end{lemma}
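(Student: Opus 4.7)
My plan is to prove Lemma~\ref{polyTn} by induction on $n$, with the base case $n=1$ being the preceding lemma. Both ingredients for the inductive step are one-variable in nature: the preceding lemma applied slicewise in the last coordinate, together with a polynomial surrogate for the (non-polynomial) slice sup norm.

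For the step from $n-1$ to $n$, write $P_t(s) := P(t_1,\ldots,t_{n-1},s)$ and $M := \|P\|_{L^\infty(\prod_{i=1}^n[-w_i,w_i])}$. The surrogate is
\[
F(t_1,\ldots,t_{n-1}) := w_n^{-1} \int_{-w_n}^{w_n} P(t_1,\ldots,t_{n-1},s)^2 \, ds,
\]
which is a polynomial in $t$ of degree at most $2D$. The equivalence $\|Q\|_{L^\infty([-w,w])} \sim w^{-1}\int_{-w}^w |Q|$ recorded immediately after the preceding lemma, applied with $Q = P_t^2$, yields $F(t) \sim \|P_t\|_{L^\infty([-w_n,w_n])}^2$ uniformly in $t$, and therefore $\|F\|_{L^\infty(\prod_{i<n}[-w_i,w_i])} \sim M^2$.

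Applying the inductive hypothesis to $F$ with the structure $T_1\subset\cdots\subset T_{n-1}$ produces a subset $T^*_{n-1}\subset T_{n-1}$ of measure $\gtrsim |T_{n-1}|$ on which $F(t)\gtrsim C_D M^2$, hence $\|P_t\|_{L^\infty([-w_n,w_n])}\gtrsim C_D M$. For each such $t$, invoking the preceding lemma on $P_t$ with $S=\tau_n(t)$ (central of width $w_n$) gives $\tau^*_n(t)\subset\tau_n(t)$ of relative measure $\gtrsim 1$ on which $|P(t,s)|\gtrsim C_D \|P_t\|_{L^\infty([-w_n,w_n])}\gtrsim C_D^2 M$. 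Setting $T_n':=\{(t,s)\in T_n:\,t\in T^*_{n-1},\ s\in\tau^*_n(t)\}$ supplies the desired pointwise lower bound on $T_n'$.

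The main obstacle is verifying $|T_n'|\gtrsim |T_n|$ by Fubini, since a priori the fibre mass $|\tau_n(t)|$ could concentrate over $T_{n-1}\setminus T^*_{n-1}$. I would handle this by using a sharpened form of the base case: for every $\eta>0$ there exists $c_{D,\eta}>0$ such that $|\{x\in S:|P(x)|<c_{D,\eta}\|P\|_{L^\infty([-w,w])}\}|\leq\eta|S|$. This is feasible because, after the Chebyshev normalization, the set where a degree-$D$ polynomial is much smaller than its $L^\infty$ norm on $[-w,w]$ is covered by at most $D$ intervals whose total length can be made an arbitrarily small fraction of $w$ by shrinking the threshold; the centrality of $S$ then converts this into an arbitrarily small fraction of $|S|$. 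Feeding this strengthened base case through the induction (with $\eta$ chosen sufficiently small at each stage relative to the preceding constants) forces both $|T_{n-1}\setminus T^*_{n-1}|$ and each slicewise defect $|\tau_n(t)\setminus\tau^*_n(t)|$ to be small, and Fubini then yields $|T_n'|\gtrsim |T_n|$ with the implicit constant depending on $D$ alone.
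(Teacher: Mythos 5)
Your argument follows the paper's blueprint closely: the surrogate $F(t)=w_n^{-1}\int_{-w_n}^{w_n}P(t,s)^2\,ds$ is precisely the paper's $P_{n-1}$ once one reduces to $P\geq 0$ by squaring, and both proofs then refine slice by slice using the one-variable lemma. You are also right to notice that the key step --- passing from ``$|T_{n-1}^*|\gtrsim|T_{n-1}|$ and $|\tau_n^*(t)|\gtrsim|\tau_n(t)|$ for $t\in T_{n-1}^*$'' to ``$|T_n'|\gtrsim|T_n|$'' --- is not automatic; the paper's sketch simply asserts ``$|T_i'|\sim|T_i|$'' without comment.

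The fix you propose, however, does not close this gap. Making $|T_{n-1}\setminus T_{n-1}^*|\leq\eta|T_{n-1}|$ controls the Lebesgue measure of the bad base but not the $\tau_n$-mass over it. Since $|T_n|=\int_{T_{n-1}}|\tau_n(t)|\,dt$ and the definition of a central set of width $w_n$ imposes no lower bound on its measure (for $\ell\leq w$ the covering bound $|S|\lesssim (w/\ell)(\ell/w)^{\eps}|S|$ is vacuous because $\eps<1$), one can have $|\tau_n(t)|$ arbitrarily small on $T_{n-1}^*$ while $|\tau_n(t)|=2w_n$ on $T_{n-1}\setminus T_{n-1}^*$, so that $\int_{T_{n-1}\setminus T_{n-1}^*}|\tau_n|$ dominates $\int_{T_{n-1}}|\tau_n|$ no matter how small $\eta$ is. Concretely, with $n=2$, $P(t_1,t_2)=t_1$, $T_1=[-w_1,w_1]$, $\tau_2(t_1)=[-w_2,w_2]$ when $|t_1|<c_{D,\eta}w_1$, and $\tau_2(t_1)$ a central set of width $w_2$ of tiny measure $\delta$ otherwise, one gets $|T_2'|/|T_2|\to 0$ as $\delta\to 0$; shrinking $\eta$ afterwards only relocates the threshold, and since the lemma's constants must be fixed before the data $T_i,\tau_i$, the same construction defeats any choice. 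A correct argument must excise, at each level $i$, a set whose $T_n$-mass (not merely whose base measure) is a small fraction --- e.g.\ by using the centrality of every slice $\tau_j(t)$ for $j>i$ as well, or by invoking extra comparability of the slice measures $|\tau_i(t)|$ that is available in the application but not recorded in the lemma's hypotheses. Your inductive hypothesis on $F$, which produces only a Lebesgue-measure statement about the base $T_{n-1}$, does not carry this weighted information.
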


\begin{proof}  We briefly recount the proof of this from \cite{ChRL}.  Its proof from
the previous lemma is as follows.  Considering $P^2$ if necessary,
we may assume that $P \geq 0$.

We define polynomials $P_i$ on $\reals^i$, $1 \leq i \leq n$ of
degree $\leq D$ as follows:

Let $P_n:=P$, and for $1 \leq i \leq n-1$ and $t \in \reals^i$,
define
\[
    P_i(t) = w_{i+1}^{-1}\int_{[-w_{i+1},w_{i+1}]}P_{i+1}(t,s)ds.
\]
For each $i$, $2 \leq i \leq n$ and for each $t \in T_{i-1}$,
\[
    \tau_i'(t):= \{s \in \tau_i(t):P_i(t,s) \gtrsim C_D
    \|P_i(t,\cdot)\|_{L^{\infty}([-w_i,w_i])}\}
\]
has measure $\gtrsim |\tau_i(t)|$ (because $s \mapsto P_i(t,s)$ is a
polynomial of degree $\leq D$).  Let
\[
    T_1':=\{s \in T_1:|P_1(s)| \gtrsim
    C_D\|P_1\|_{L^{\infty}([-w_1,w_1])}\}
\]
and define the sets
\[
    T_i':= \{(t,s):t \in T_{i-1}' \text{ and } s \in \tau_i'(t)\}.
\]
Then for each $i$, $|T_i'| \sim |T_i|$, and if $t \in T_n'$,
\[
    P(t) \sim C_D\|P\|_{L^{\infty}(\prod_{i=1}^n [-w_i,w_i])}.
\]
\end{proof}

\subsection{Back to $\Phi_{\bfj}^n$.}  
In this subsection, we use the polynomial lemmas to make the heuristic arguments from an earlier subsection rigorous and thereby complete the proof of Theorem~\ref{suff}.  The main step will be to prove that the mapping $\Phi_{\bfj}^n$ is $O(N)$-to-1 off of a negligible set.  This, when combined with the lemmas on generic sets, will give us the lower bound on the volume of $\Phi_{\bfj}^n(T_n)$ that we need.

Because $\Phi_{\bfj}^n$ is smooth, from the definition of $B_{\bfj}(x_0;\delta_1,\ldots,\delta_k)$, we have the upper bound
\begin{align} \label{ub_Bj}
|B_{\bfj}(x_0;\delta_1,\ldots,\delta_k)| &= |\Phi_{\bfj}^n(\tilde{T}_n)| \leq \int_{\tilde{T}_n} |\det(\partial_t \Phi_{\bfj}^n(t))|\, dt \\ \notag & \leq \delta^{\deg(\bfj)} \| \det \partial_t \Phi_{\bfj}^n\|_{L^{\infty}(\tilde{T}_n)} =: \delta^{\deg(\bfj)} \scriptJ_{\bfj}(\delta).
\end{align}
In addition, we have already seen that 
\begin{align} \label{compare_balls}
|B_{\bfj}(x_0;\delta_1,\ldots,\delta_k)| \sim |B(x_0;\delta_1,\ldots,\delta_k)| \sim |\Lambda_{\delta}(x_0)|.
\end{align}
Combining \eqref{ub_Bj} and \eqref{compare_balls}, we thus have the inequality
\[
\tfrac{1}{\delta^{\deg(\bfj)}}|\Lambda_{\delta}(x_0)| \lesssim \scriptJ_{\bfj}(\delta).
\]
Because $\Phi^n_{\bfj}$ is smooth, $\scriptJ_{\bfj}(\delta)$ is bounded above, and because $|\Lambda_{\delta}(x_0)| \gtrsim \delta^{\deg(I_0)}$, $\scriptJ_{\bfj}(\delta)$ is bounded below.  In particular, we have
\begin{align} \label{lb_Jj}
\alpha_k^C \lesssim \scriptJ_{\bfj}(\delta) \lesssim 1
\end{align}
(by the definition of the $\delta_j$ and monotonicity of the $\alpha_j$).

We now use this information to establish a lower bound for $|\Phi_{\bfj}^n(T_n)|$.  This will complete the proof of the theorem.  Let $Q_w := \prod_{j=1}^n [-w_j,w_j]$ be the rectangle whose side-lengths are given by the widths $w_j$ (so $T_n \subset Q_w$).  Then smoothness of $\Phi_{\bfj}^n$, the lower bound \eqref{lb_Jj}, and the fact that $w_j \lesssim \alpha_k^{\eps}$ imply that we can control high order remainder terms in the Taylor series of $\Phi_{\bfj}^n$ on $Q_w$.  In fact, taking $N \gg \eps^{-1}$ and the $\alpha_j$ sufficiently small, we have
\begin{align} \label{bound_diff}
\|\Psi_{\bfj} - \Phi_{\bfj}^n\|_{C^2(Q_w)} \leq c_0 \alpha_k^C(\scriptJ_{\bfj}(\delta))^2,
\end{align}
where $\Psi_{\bfj}$ is the Taylor polynomial of $\Phi_{\bfj}^n$ of degree $N$ centered at 0, and we may choose $c_0$ as small and $C$ as large as we like.  ($N$ will also depend on $C,c_0$.)  Here we may choose the $\alpha_j$ as small as needed (how small depends on $b,\eps, N$, and $\|X_j\|_{C^{M_N}(V)}$, $j=1,\ldots,k$) by making the initial partition of unity sufficiently fine. 

We apply a linear transformation $A_w$, mapping $Q_w$ onto the unit cube $Q$.  The images of the $\tau_j$, denoted $\tilde{\tau}_j$, are then central sets of width 1, and, denoting by $\tilde{\Psi}$ and $\tilde{\Phi}$ the compositions $\Psi_{\bfj} \circ A_w^{-1}$ and $\Phi_{\bfj}^n \circ A_w^{-1}$ (respectively), we have that $\tilde{\Psi}$ and $\tilde{\Phi}$ are smooth, that $\tilde{\Psi}$ is the degree $N$ Taylor polynomial of $\tilde{\Phi}$, and that the bound \eqref{bound_diff} is transformed to 
\begin{align} \label{bound_diff2}
\|\tilde{\Psi} - \tilde{\Phi}\|_{C^2(Q)} \leq c_0 (\tilde{\scriptJ})^2,
\end{align}
where $\tilde{\scriptJ} = \| \det \partial_t \tilde{\Phi}\|_{L^{\infty}(Q)}$.

We are now in a position to apply Lemma 7.1 of \cite{ChRL}, the upshot of which we state below.  

\begin{lemma} \label{lemma_7_1}
The bound \eqref{bound_diff2} and the centrality of the intervals $\tilde{\tau}_j$ imply that
\begin{align} \label{Lemma7_1}
|\tilde{\Phi}(A_w(T_n))| \gtrsim |A_w T_n| \tilde{\scriptJ} \gtrsim \tfrac{|A_w T_n|}{|A_w \tilde{T}_n|} \int_{A_w \tilde{T}_n} |\det \partial_t \tilde{\Phi}(t)|\, dt.
\end{align}
\end{lemma}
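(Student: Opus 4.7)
The plan is to apply the polynomial lemma (Lemma~\ref{polyTn}) to the Jacobian determinant of the Taylor polynomial $\tilde{\Psi}$, transfer the estimate to $\tilde{\Phi}$ using the quantitative $C^2$-closeness \eqref{bound_diff2}, and then invoke an area-formula argument whose validity rests on a bounded-multiplicity bound for $\tilde{\Phi}|_{T_n'}$. This is essentially the strategy of Lemma 7.1 in \cite{ChRL}.

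First, since $\|\tilde{\Phi} - \tilde{\Psi}\|_{C^2(Q)} \leq c_0 \tilde{\scriptJ}^2$ and $\|\partial_t\tilde{\Phi}\|_{L^{\infty}(Q)} \lesssim 1$ (by smoothness of $\Phi_{\bfj}^n$ composed with the rescaling $A_w^{-1}$), a routine multilinear algebra estimate applied to the determinant gives $|\det\partial_t\tilde{\Phi}(t) - \det\partial_t\tilde{\Psi}(t)| \lesssim c_0 \tilde{\scriptJ}^2$ pointwise on $Q$. By \eqref{lb_Jj}, $\tilde{\scriptJ} \lesssim 1$, so choosing $c_0$ small enough absorbs this error, and in particular $\|\det\partial_t\tilde{\Psi}\|_{L^{\infty}(Q)} \sim \tilde{\scriptJ}$.

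Next, I would apply Lemma~\ref{polyTn} to $P = \det\partial_t\tilde{\Psi}$, a polynomial of degree $\leq nN$, on the product of the central sets $\tilde{\tau}_j$ of width $\sim 1$. This yields a subset $T_n' \subset A_w T_n$ of measure $\gtrsim |A_w T_n|$ on which $|\det\partial_t\tilde{\Psi}| \gtrsim_N \tilde{\scriptJ}$, hence $|\det\partial_t\tilde{\Phi}| \gtrsim \tilde{\scriptJ}$ on $T_n'$ once the $O(c_0\tilde{\scriptJ}^2)$ error is absorbed.

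The key remaining step is a uniform multiplicity bound: show that $\tilde{\Phi}|_{T_n'}$ is at most $M = M(N)$-to-one. I would combine polynomial Bezout for the fibers of $\tilde{\Psi}$ (each the intersection of $n$ algebraic hypersurfaces of degree $\leq N$, 0-dimensional wherever the Jacobian is nonvanishing, and so of cardinality $\leq N^n$) with the inverse function theorem on a scale $\sim \tilde{\scriptJ}$ to keep preimages separated, then transfer the count to $\tilde{\Phi}$ using \eqref{bound_diff2}. With this in hand, the area formula closes the argument:
\begin{align*}
|\tilde{\Phi}(A_w T_n)| \geq |\tilde{\Phi}(T_n')| \geq M^{-1}\int_{T_n'}|\det\partial_t\tilde{\Phi}(t)|\,dt \gtrsim |T_n'|\,\tilde{\scriptJ} \gtrsim |A_w T_n|\,\tilde{\scriptJ},
\end{align*}
giving the first inequality of \eqref{Lemma7_1}. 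The second is immediate from the trivial bound $|\det\partial_t\tilde{\Phi}| \leq \tilde{\scriptJ}$ on $A_w\tilde{T}_n$.

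The main obstacle is the multiplicity bound: although $\tilde{\Psi}$ admits a clean Bezout count, importing it to the non-polynomial $\tilde{\Phi}$ through a $C^2$-perturbation of size only $c_0\tilde{\scriptJ}^2$, while the Jacobian itself is only $\sim \tilde{\scriptJ}$, requires a careful balancing of $N$, $c_0$, and the local-injectivity scale, exactly as in Christ's proof of Lemma~7.1 in \cite{ChRL}.
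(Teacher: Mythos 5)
Your proposal is correct and follows essentially the same strategy as the paper's own sketch (both deferring to Lemma~7.1 of \cite{ChRL}): apply the polynomial refinement lemma to $\det \partial_t \tilde\Psi$ to obtain a set $T_n'$ of comparable measure on which the Jacobian is $\gtrsim \tilde{\scriptJ}$, pass between $\tilde\Psi$ and $\tilde\Phi$ via the $C^2$-closeness \eqref{bound_diff2}, establish a bounded multiplicity for $\tilde\Phi$ using Bezout and the quantitative inverse function theorem at scale $\sim \tilde{\scriptJ}$, and conclude with the area formula. One small caveat on the multiplicity step: your direct ``cardinality $\leq N^n$'' bound on the fibers of $\tilde\Psi$ is more delicate than stated, since a priori $\tilde\Psi^{-1}(y)$ could have positive-dimensional components away from $T_n'$; the paper avoids this by covering the good set with balls $Q_j$ of diameter $\lesssim \tilde{\scriptJ}$ on which $\tilde\Phi$ is injective, dilating to $Q_j^*$ with bounded overlap, bounding via Bezout the number of $j$ with $y \in \tilde\Psi(Q_j^*)$, and establishing the crucial inclusion $\tilde\Phi(Q_j) \subset \tilde\Psi(Q_j^*)$ by a topological degree argument. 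You correctly identify this $\tilde\Psi$-to-$\tilde\Phi$ transfer as the main difficulty and defer to \cite{ChRL}, where the degree argument supplies exactly the missing mechanism.
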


Undoing the linear transformation, this implies that
\[
|\Phi_{\bfj}^n(T_n)| \gtrsim \tfrac{|T_n|}{|\tilde{T}_n|} \int_{\tilde{T}_n} |\det \partial_t \Phi_{\bfj}^n(t)|\, dt \gtrsim \alpha_k^{C\eps}|B_{\bfj}(x;\delta_1,\ldots,\delta_n)|,
\]
and the theorem is proved.

We briefly sketch the proof of Lemma~\ref{lemma_7_1}.  

\begin{proof}[Sketch of proof]  First, we may use the bound \eqref{bound_diff2} and centrality of the $A_w(\tau_j)$ to refine $A_w(T_n)$ to the region where 
\[
|\det \partial_t \tilde{\Psi}(t)| \sim |\det \partial_t \tilde{\Phi}(t)| \gtrsim \tilde{\scriptJ},
\]
without significantly reducing the volume.  Denote the refined region by $\omega$.  The idea is to show that $\tilde{\Phi}$ is $O(N)$-to-1 on $\omega$, which implies that
\[
|\tilde{\Phi}(A_w(T_n))| \geq |\tilde{\Phi}(\omega)| \geq \tfrac{1}{C_N} \int_{\omega} |\det \partial_t \tilde{\Phi}(t)|\, dt;
\]
\eqref{Lemma7_1} follows.

By quantitative forms of the inverse function theorem, $\tilde{\Phi}$ is injective on balls of diameter less than a small constant times $\tilde{\scriptJ}$.  We cover $\omega$ by a family $\{Q_j\}$ of such balls.  The next step is to use the polynomial approximation to complete the argument.

Let $Q_j^*$ denote the dilate of $Q_j$ by a factor $C_d$.  Then these cubes have bounded overlap, and by Bezout's theorem from algebraic geometry, a point $y \in \reals^n$ may lie in the image of at most $C_{N,d}$ of these cubes under the (polynomial) mapping $\tilde{\Psi}$.  The final step of the proof is to transfer this property to $\tilde{\Phi}$ by showing that $\tilde{\Phi}(Q_n) \subset \tilde{\Psi}(Q_n^*)$.  

The argument is topological.  Let $y \in \tilde{\Phi}(Q_j)$ and let $B$ be a ball with $Q_j \subset B \subset Q_j^*$.  Local injectivity and the bound \eqref{bound_diff2} imply that $y \notin \tilde{\Psi}(\partial B) \cup \tilde{\Phi}(\partial B)$ and that
\[
\tilde{\Psi}|_{\partial B} : \partial B \to \reals^n \backslash \{y\} \qquad \tilde{\Phi}|_{\partial B} : \partial B \to \reals^n \backslash \{y\}
\]
are homotopic.  By local injectivity, their topological degree must then be 1, which implies in particular that $y \in \tilde{\Psi}(B)$.  Further details may be found in \cite{ChRL}.
\end{proof}


\begin{thebibliography}{50}
%
\bibitem{BHT}
{I.~Bejenaru, S.~Herr \and D.~Tataru},
{\em A convolution estimate for two-dimensional hypersurfaces}, preprint, arXiv:  0809.5091.  To appear, Rev.\ Mat.\ Iberoam.
%
\bibitem{BB}
{J.~Bennett \and N.~Bez},
{\em Nonlinear Brascamp--Lieb inequalities and applications to harmonic analysis}, preprint, arXiv:  0906.2064.  To appear, J.~Funct.\ Anal.
%
\bibitem{BCW}
{J.~Bennett, A.~Carbery, \and J.~Wright},
{\em A non-linear generalization of the Loomis--Whitney inequality and applications},
Math.\ Res.\ Lett.\ 12 (2005), 443--457.
%
\bibitem{ChRL} 
{M.~Christ},
{\em Lebesgue space bounds for one-dimensional generalized Radon transforms}, preprint.
%
\bibitem{CCC}
{\bysame}, 
{\em Convolution, curvature, and combinatorics. A case study},
Internat.\ Math.\ Research Notices  19 (1998), 1033--1048.
%
\bibitem{CNSW} 
{M.~Christ, A.~Nagel, E.~M.~Stein, and S.~Wainger},
{\em Singluar and maximal Radon transforms:  analysis and geometry}, 
Ann.\ of Math., 150 (1999), 489--577.
%
\bibitem{DLW} 
{S.~Dendrinos, N.~Laghi, \and J.~Wright},
{\em Universal $L^p$ improving for averages along polynomial curves in low dimensions}, J.~Funct.\ Anal.\ 257 (2009), no.~5, 1355--1378.
%
\bibitem{GressPolyEndpt}
{P.~T.~Gressman},
{\em $L^p$-improving properties of averages on polynomial curves and related integral estimates}, Math.\ Res.\ Lett.\ 16 (2009), no.~6, 971--989.
%
\bibitem{H}
{L.~H\"{o}rmander}, 
{\em Hypoelliptic second order differential equations}, Acta Math.\ 119 (1967), 147--171.
%
\bibitem{Laghi}
{N.~Laghi}, 
{\em A note on restricted X-ray transforms}, Math.\ Proc.\ Cambridge Philos.\ Soc.\ 146 (2009), no. 3, 719--729.
%
\bibitem{Litt}
{W.~Littman},
{\em $L\sp{p}-L\sp{q}$-estimates for singular integral
operators arising from hyperbolic equations. Partial differential
equations}, (Proc.\ Sympos.\ Pure Math.,\ Vol.\ XXIII, Univ.\
California, Berkeley, Calif.,\ 1971),  Amer.\ Math.\ Soc.,\
Providence,\ R.I.,\ (1973) 479--481.
%
\bibitem{LW}
{L.~H.~Loomis \and H.~Whitney},
{\em An inequality related to the isoperimetric inequality}, Bull.\ Amer.\ Math.\ Soc.\ 55, (1949) 961--962.
%
\bibitem{NSW}  
{A.~Nagel, E.~M.~Stein, \and S.~Wainger},
{\em Balls and metrics defined by vector fields I:  basic properties}, 
Acta Math., 155 (1985), no.~1--2, 103--147.
%
\bibitem{Obe1}
{D.~M.~Oberlin},
{\em Convolution estimates for some measures on curves},
Proc.\ Amer.\ Math.\ Soc.\ 99 (1987), no.~1, 56--60.
%
\bibitem{Obe2}
{D.~M.~Oberlin},
{\em A convolution estimate for a measure on a curve in $\bold R^4$}, Proc.\ Amer.\ Math.\ Soc.\ 125 (1997), no.~5, 1355--1361.
%
\bibitem{Obe3}
{D.~M.~Oberlin},
{\em A convolution estimate for a measure on a curve in $\bold R^4$. II}, Proc.\ Amer.\ Math.\ Soc.\ 127 (1999), no.~1, 217--221.
%
\bibitem{Obe4} 
{D.~M.~Oberlin},
{\em Convolution with measures on polynomial curves}, 
Math.\ Scand., 90 (2002), no.~1, 126--138. 
%
\bibitem{St} 
{E.~M.~Stein},
{\em Harmonic analysis:  real-variable methods, orthogonality, and oscillatory integrals},
Princeton Math.\ Series, no.~43, Princeton University Press, Princeton, NJ, 1993.
%
\bibitem{BSCCC}
{B.~Stovall},
{\em Endpoint bounds for a generalized Radon transform}, J.~London Math.\ Soc.\ (2) 80 (2009) 357--374.
%
\bibitem{BSpoly_endpt}
\bysame{\em Endpoint $L^p \to L^q$ bounds for integration along polynomial curves}, preprint. 
%
\bibitem{Street}
{B.~Street},
{\em Multi-parameter Carnot--Caratheodory balls and the theorem of Frobenius}, preprint, arXiv:  0901.2910.
%
\bibitem{TW} 
{T.~Tao \and J.~Wright},
{\em $L^p$ improving bounds for averages along curves},
J.\ Amer.\ Math.\ Soc.\ 16 (2003), no.~3, 605--638.
%
\end{thebibliography}
\end{document}